\documentclass{amsart}
\usepackage[english]{babel}

\usepackage[latin9]{inputenc}

\usepackage{amstext}
\usepackage{amssymb}
\usepackage{amsmath}
\usepackage[nobysame]{amsrefs}
\usepackage{mathrsfs}
\usepackage{graphicx}
\usepackage{array}
\usepackage{mathrsfs}
\usepackage{verbatim}
\usepackage[margin=1.1in]{geometry}

\numberwithin{equation}{section}

\theoremstyle{plain}
\newtheorem{theorem}{Theorem}[section]

\newtheorem{lemma}{Lemma}[section]

\newtheorem{prop}[lemma]{Proposition}

\theoremstyle{definition}
\newtheorem{defn}[lemma]{Definition}

\theoremstyle{remark}
\newtheorem*{remark}{Remark}

\DeclareMathOperator{\tr}{tr}

\DeclareMathOperator{\spec}{spec}

\DeclareMathOperator{\Id}{Id}

\newcommand{\ud}{\,\mathrm{d}}
\newcommand{\RR}{\mathbb{R}}
\newcommand{\CC}{\mathbb{C}}
\newcommand{\NN}{\mathbb{N}}
\newcommand{\ZZ}{\mathbb{Z}}

\newcommand{\Or}{\mathcal{O}}
\newcommand{\bd}[1]{\boldsymbol{#1}}
\newcommand{\I}{\mathrm{i}}
\newcommand{\abs}[1]{\left\lvert#1\right\rvert}
\newcommand{\norm}[1]{\left\lVert#1\right\rVert}

\newcommand{\mc}[1]{\mathcal{#1}}
\newcommand{\average}[1]{\langle#1\rangle}

\newcommand{\braket}[1]{\left\langle#1\right\rangle}

\usepackage{color}
\newcommand{\wh}[1]{\widehat{#1}}
\newcommand{\wt}[1]{\widetilde{#1}}

\IfFileExists{mathabx.sty}%
  {\DeclareFontFamily{U}{mathx}{\hyphenchar\font45}%
   \DeclareFontShape{U}{mathx}{m}{n}{<->mathx10}{}%
   \DeclareSymbolFont{mathx}{U}{mathx}{m}{n}%
   \DeclareFontSubstitution{U}{mathx}{m}{n}%
   \DeclareMathAccent{\widebar}{0}{mathx}{"73}%
}{%
  \PackageWarning{mathabx}{%
    Package mathabx not available, therefore\MessageBreak substituting
    widebar with overline\MessageBreak }%
  \newcommand{\widebar}[1]{\wb{#1}}%
}
\newcommand{\wb}[1]{\widebar{#1}}

\newcommand{\veps}{\varepsilon}

\newcommand{\FGA}{\mathrm{FGA}}

\begin{document}

\title[FGA for high frequency wave propagation in periodic media]{Frozen Gaussian approximation
  for high frequency wave propagation in periodic media}

\date{\today}

\author{Ricardo Delgadillo}
\address{Department of Mathematics, University of California, Santa Barbara,
CA 93106}
\email{rdelgadi@math.ucsb.edu}

\author{Jianfeng Lu}
\address{Departments of Mathematics, Physics, and
  Chemisty, Duke University, Box 90320, Durham, NC 27708}
\email{jianfeng@math.duke.edu}

\author{Xu Yang}
\address{Department of Mathematics, University of California, Santa Barbara,
CA 93106}
\email{xuyang@math.ucsb.edu}

\thanks{R.D. and X.Y. were partially supported by the NSF grants DMS-1418936 and DMS-1107291: NSF Research Network in Mathematical Sciences ``Kinetic description of emerging challenges in multiscale problems of natural science''.
The work of J.L.~was supported in part by the Alfred P.~Sloan foundation and the National Science
Foundation under award DMS-1312659. X.Y. was also partially supported
by the Regents Junior Faculty Fellowship and Hellman Family Foundation Faculty Fellowship of University of California, Santa Barbara.}

\begin{abstract}
  Propagation of high-frequency wave in periodic media is a
  challenging problem due to the existence of multiscale characterized
  by short wavelength, small lattice constant and large physical
  domain size. Conventional computational methods lead to extremely
  expensive costs, especially in high dimensions. In this paper, based
  on Bloch decomposition and asymptotic analysis in the phase space,
  we derive the frozen Gaussian approximation for high-frequency wave
  propagation in periodic media and establish its converge to the true
  solution. The formulation leads to efficient numerical algorithms,
  which are presented in a companion paper \cite{DeLuYa:14}.
\end{abstract}

\maketitle
\section{Introduction}

We are interested in studying high-frequency wave propagation in periodic media. A typical example is given by the following Schr\"{o}dinger
equation in the semiclassical regime with a superposition of a (highly
oscillatory) microscopic periodic potential and a macroscopic smooth
potential,
\begin{equation}\label{eq:schrodinger}
  \I\veps\dfrac{\partial\psi^{\veps}}{\partial t}=-\dfrac{\veps^{2}}{2}\Delta\psi^{\veps}+V(\bd{x}/\veps)\psi^{\veps}+U(\bd{x})\psi^{\veps},\hspace{.5cm}\bd{x}\in\mathbb{R}^{d},
\end{equation}
where $V$ and $U$ are smooth potential functions, $V$ is periodic with
respect to the lattice $\mathbb{Z}^{d}$:
$V(\bd{x}+\mathbf{e}_{i})=V(\bd{x})$ for any $\bd{x}\in\mathbb{R}^{d}$
and $\{\bd{e}_{i},\, i=1,2,\cdots, d\}$ is the standard basis of
$\mathbb{R}^{d}$. Here $\veps\ll1$ is the rescaled Planck constant,
$\psi^{\veps}$ is the wave function, and $d$ is the spatial dimensionality.

The equation \eqref{eq:schrodinger} can be viewed as a model for
electron dynamics in a crystal, where $V$ is the effective periodic
potential induced by the crystal, and $U$ is some external macroscopic
potential. Notice
that we have identified the period of $V(\bd{x}/\veps)$ and the
``semiclassical parameter'' in front of the derivative terms. This
parameter choice gives the most interesting case as $\veps \to 0$ \cite{BeMaPo:01}.

The mathematical analysis of this work is motivated by the challenge
of numerical simulation of \eqref{eq:schrodinger} when $\veps$ is
small. In this semiclassical regime, the wave function $\psi^\veps$
becomes oscillatory with wave length $\Or(\veps)$. This means a
computational domain of order $1$ size contains $\mathcal{O}(1/\veps)$
wavelengths, and each of them needs to be resolved if conventional
numerical methods are applied. For example, even for the simplest case
$V=0$ (no lattice potential), a mesh size of $\Or(\veps)$ is required
when using the time-splitting spectral method \cite{BaJiMa:02} to
compute (\ref{eq:schrodinger}) directly; an even worse mesh size of
$o(\veps)$ is needed if one uses the Crank-Nicolson schemes
\cite{MaPiPo:99} or the Dufort-Frankel scheme
\cite{MaPiPo:00}. Besides, the presence of non-zero lattice potential
introduces further difficulties which restrict the mesh size to be
$o(\veps)$ in the standard time-splitting spectral method
\cite{BaJiMa:02}. Special techniques using Bloch decomposition are
needed to relax the mesh size to be of $\Or(\veps)$
\cite{HuJiMaSp:07,HuJiMaSp:08,HuJiMaSp:09}. Moreover, in these
methods, a large domain is demanded in order to avoid the boundary
effects.  Therefore the total number of grid points is huge, which
usually leads to unaffordable computational cost, especially in high
($d>1$) dimensions.

An alternative efficient approach is to solve \eqref{eq:schrodinger}
asymptotically by the Bloch decomposition and modified WKB methods
\cite{BeLiPa:78,CarlesSparber:12,ELuYa:13}, which lead to eikonal and 
transport equations in the semi-classical regime. An advantage of this
method is that the computational cost is independent of $\veps$.
However, the eikonal equation can develop singularities which make the
method break down at caustics. The Gaussian beam method (GBM)
~\cite{Po:82} was then introduced by Popov to overcome this drawback
at caustics. The idea is to allow the phase function to be {complex}
and choose the imaginary part properly so that the solution has a
Gaussian profile; see
\cite{TaQiRa:07,Ta:08,MoRu:10,JiWuYa:08,JiWuYa:10,JiWuYaHu:10,JiWuYa:11,JiMaSp:11,JeJi:14}
for recent developments.  Similar ideas can be also found in the
Hagedorn wave packet method \cite{Ha:80,FaGrLu:09}.  Unlike the
geometric optics based method, the Gaussian beam method allows for
accurate computation of wave function around caustics
~\cite{Ra:82,DiGuRa:06}. But the problem is that the constructed beam
must stay near the geometric rays to maintain accuracy. This becomes a
drawback when the solution spreads~\cite{LuYa:11, MoRu:10, QiYi2:10}.

The Herman-Kluk propagator \cite{HeKl:84, Ka:94, Ka:06} was proposed
for Schr\"odinger equation without the oscillatory periodic background
potential. The method was rigorously analyzed in \cite{SwRo:09, Ro:10}
and further extended as the frozen Gaussian approximation (FGA) for
general high frequency wave propagation
in~\cite{LuYa:11,LuYa:12,LuYa2:12}.  The FGA method uses Gaussian
functions with fixed widths, instead of using those that might spread
over time, to approximate the wave solution.  Despite its superficial
similarity with the Gaussian beam method, it is different at a
fundamental level.  FGA is based on phase plane analysis, while GBM is
based on the asymptotic solution to a wave equation with Gaussian
initial data.  In FGA, the solution to the wave equation is
approximated by a superposition of Gaussian functions living in phase
space, and each function is {not} necessarily an asymptotic solution,
while GBM uses Gaussian functions (called beams) in physical space,
with each individual beam being an asymptotic solution to the wave
equation. The main advantage of FGA over GBM is that the problem of
beam spreading no longer exists.

In this paper, we extend FGA for computation of high-frequency wave
propagation in periodic media. We mainly focus on the derivation of an
integral representation formula of FGA in the phase space and
establish the rigorous convergence results for FGA. While the FGA
works for general strictly hyperbolic equations, we focus in this
paper the case of semiclassical Schr\"odinger equation with periodic
media \eqref{eq:schrodinger}.  The computational algorithm and
numerical results will be presented in a separate paper
\cite{DeLuYa:14}. The rest of the paper is organized as follows.  We
first recall the Bloch decomposition of periodic media and introduce
the windowed Bloch transform in Section~\ref{sec:WBT}.  In
Section~\ref{sec:main_results}, we present the formulation of FGA for
periodic media and the main convergence result. The proof of the main
result is given in Section~\ref{sec:proof}.

\textbf{Notations.}  The absolute value, Euclidean distance, vector
norm, induced matrix norm, and sum of components of a multi-index will
all be denoted by $|\cdot|$. We will use the standard notations
$\mathcal{S}$, $\mathcal{C}^{\infty}$, and $\mathcal{C}_{c}^{\infty}$
for Schwartz class functions, smooth functions, and compactly
supported smooth functions, respectively. We will sometimes use
subscripts to specify the dependence of a constant on the parameters,
for instance, notations like $C_{T}$ to specify the dependence of a
constant on a parameter $T$.

\section{Bloch decomposition and windowed Bloch transform}\label{sec:WBT}
The frozen Gaussian approximation for periodic media relies crucially
on the Bloch decomposition to capture the fine scale ($\Or(\veps)$
spatial scale) oscillation.  First we briefly recall
the well-known Bloch-Floquet decomposition for Schr\"odinger operators
with a periodic potential.

Consider a Schr\"odinger operator
\begin{equation}
  H = - \dfrac{1}{2}\Delta + V(\bd{x}),
\end{equation}
where the potential $V$ is periodic with respect to the lattice
$\ZZ^d$. We denote $\Gamma$ the unit cell of the lattice: $\Gamma =
[0, 1)^d$. The unit cell of the reciprocal lattice (known as the first
Brillouin zone) is given by $\Gamma^{\ast} = [- \pi, \pi)^d$. It is
standard (e.g.,~\cite{ReedSimon4}) that the spectrum of $H$ is
given by energy bands
\begin{equation*}
  \spec(H) = \bigcup_{n=1}^{\infty} \bigcup_{\bd{\xi} \in \Gamma^{\ast}} E_n(\bd{\xi}),
\end{equation*}
where for each $\bd{\xi} \in \Gamma^{\ast}$, $\{ E_n(\bd{\xi}) \}$ are
the collection of eigenvalues (in ascending order) of the operator
\begin{equation*}
  H_{\bd{\xi}} = \frac{1}{2}(-\I \nabla_{\bd{x}}  + \bd{\xi})^2+V(\bd{x})
\end{equation*}
with periodic boundary condition on $\Gamma$. The Bloch waves are the
associated eigenfunctions: For each band $n$ and $\bd{\xi}\in
\Gamma^{\ast}$, it solves
\begin{equation}\label{eq:Bloch}
  H_{\bd{\xi}} u_n(\bd{\xi},\cdot) = E_n(\bd{\xi}) u_n(\bd{\xi},\cdot).
\end{equation}
with periodic boundary condition on $\Gamma$, where $\bd{\xi}$ serves
as a parameter in the above equation. $u_n(\bd{\xi}, \cdot)$ is
normalized that
\begin{equation}\label{eq:normalization}
  \int_{\Gamma} \abs{u_n(\bd{\xi}, \bd{x})}^2 \ud x = 1.
\end{equation}
We  extend $u_n(\bd{\xi}, \bd{x})$ periodically with respect to
the second variable, so it is defined on $\Gamma^{\ast} \times
\RR^d$. We will also write $u_{n,\bd{\xi}} = u_n(\bd{\xi}, \cdot)$ when
the former is more convenient.

These Bloch waves generalize the Fourier modes (complex exponentials)
to periodic media (see for example discussions in
\cite{ELuYa:13}). In particular, for any function $f \in
L^2(\RR^d)$, we have the Bloch decomposition
\begin{equation}\label{eq:blochdecomp}
  f(\bd{x}) = \frac{1}{(2\pi)^{d/2}} \sum_{n=1}^{\infty} \int_{\Gamma^{\ast}}
  u_n(\bd{\xi}, \bd{x}) e^{\I \bd{\xi} \cdot \bd{x}} (\mc{B} f)_n(\bd{\xi}) \ud \bd{\xi}.
\end{equation}
where the Bloch transform $\mc{B}: L^2(\RR^d) \to
L^2(\Gamma^{\ast})^{\mathbb{N}}$ is given by
\begin{equation}\label{eq:blochcoef}
  (\mc{B}f)_{n}(\bd{\xi}) = \frac{1}{(2\pi)^{d/2}}\int_{\RR^d} \wb{u}_n(\bd{\xi}, \bd{y}) e^{-\I \bd{\xi} \cdot \bd{y}} f(\bd{y}) \ud \bd{y}.
\end{equation}
As an analog of the Parseval's identity, we have
\begin{equation}\label{eq:parseval}
  \int_{\RR^d} \abs{f(\bd{x})}^2 \ud \bd{x} =
  \sum_{n=1}^{\infty} \int_{\Gamma^{\ast}} \abs{(\mc{B} f)_n(\bd{\xi})}^2 \ud \bd{\xi}.
\end{equation}
As suggested by \eqref{eq:blochdecomp} and \eqref{eq:blochcoef}, we
introduce the notation $\Omega$ to denote the phase space
corresponding to one band ($\Gamma^{\ast}$ is viewed as a torus,
\textit{i.e.}, periodic boundary condition is assumed on
$\Gamma^{\ast}$)
\begin{equation}
  \Omega: = \RR^d \times \Gamma^{\ast} = \bigl\{ (\bd{x}, \bd{\xi}) \mid \bd{x} \in \RR^d, \, \bd{\xi} \in \Gamma^{\ast}\}.
\end{equation}
Correspondingly, we will use the notation $(\bd{q}, \bd{p})$ for a point in $\Omega$.

For later usage, we define the Berry phase $\mc{A}_n$ for the Bloch
waves,
\begin{equation}\label{eq:Berry}
  \mc{A}_n(\bd{\xi}) = \bigl\langle
  u_n(\bd{\xi}, \cdot),
 \I \nabla_{\bd{\xi}} u_n(\bd{\xi}, \cdot) \bigr\rangle_{L^2(\Gamma)}.
\end{equation}
The normalization condition~\eqref{eq:normalization} implies $\mc{A}_n(\bd{\xi})$ is always a
real number. We should be cautious about one subtlety though as the
eigenvalue equation \eqref{eq:Bloch} and the normalization only define
$u_n(\bd{\xi}, \cdot)$ up to a unit complex number, in particular, for
any function $\varphi$ periodic in $\Gamma^{\ast}$,
\begin{equation}
  v_n(\bd{\xi}, \bd{x}) = e^{\I \varphi(\bd{\xi})} u_n(\bd{\xi}, \bd{x}), \qquad (\bd{x}, \bd{\xi}) \in \Omega,
\end{equation}
also provides a set of Bloch waves. This is known as the gauge choice
for the Bloch waves. However, different gauge choice gives different
values of $\mc{A}_n(\bd{\xi})$ and even causes trouble if $\varphi$ is
discontinuous. While for the analysis, it suffices to assume smooth
dependence of $u_n$ on $\bd{\xi}$ (which is possible as the $n$-th
band is separated from the rest of the spectrum), this gauge freedom
makes numerical computation nontrivial. We will further address this
by designing a gauge-invariant algorithm in a companion paper
\cite{DeLuYa:14} on the numerical algorithms.

Differentiating \eqref{eq:Bloch} with respect to $\bd{\xi}$ produces
\begin{equation}\label{eq:diffBloch}
  H_{\bd{\xi}} \nabla_{\bd{\xi}}
  u_n(\bd{\xi}, \bd{x}) + (-\I \nabla_{\bd{x}} + \bd{\xi}) u_n(\bd{\xi}, \bd{x})
  = E_n(\bd{\xi}) \nabla_{\bd{\xi}}
  u_n(\bd{\xi}, \bd{x}) + \nabla_{\bd{\xi}} E_n(\bd{\xi}) u_n(\bd{\xi}, \bd{x}).
\end{equation}
Taking inner product with $u_n(\bd{\xi}, \cdot)$ yields
\begin{equation}\label{eq:diffE}
  \nabla_{\bd{\xi}} E_n(\bd{\xi}) = -\I \average{u_n(\bd{\xi}, \cdot),
    \nabla_{\bd{x}} u_n(\bd{\xi}, \cdot)} +  \bd{\xi}.
\end{equation}
Differentiate \eqref{eq:diffBloch} with respect to $\bd{\xi}$ again gives
\begin{multline}
  H_{\bd{\xi}} \nabla_{\bd{\xi}}^2 u_n(\bd{\xi}, \bd{x}) + 2 (-\I
  \nabla_{\bd{x}} + \bd{\xi}) \nabla_{\bd{\xi}}
  u_n(\bd{\xi}, \bd{x}) + u_n(\bd{\xi}, \bd{x}) I \\
  = E_n(\bd{\xi}) \nabla_{\bd{\xi}}^2 u_n(\bd{\xi}, \bd{x}) + 2
  \nabla_{\bd{\xi}} E_n(\bd{\xi}) \nabla_{\bd{\xi}} u_n(\bd{\xi},
  \bd{x}) + E_n(\bd{\xi}) \nabla_{\bd{\xi}}^2 u_n(\bd{\xi}, \bd{x}).
\end{multline}
Taking inner product with $u_n(\bd{\xi}, \cdot)$, one gets
\begin{multline}\label{eq:diffE2}
  \average{u_n(\bd{\xi}, \cdot), -\I \nabla_{\bd{x}}\nabla_{\bd{\xi}}
    u_n(\bd{\xi}, \bd{x})} + \bd{\xi} \average{u_n(\bd{\xi}, \cdot),
    \nabla_{\bd{\xi}} u_n(\bd{\xi}, \cdot)} + I/2  \\
  = \nabla_{\bd{\xi}} E_n(\bd{\xi}) \average{u_n(\bd{\xi}, \cdot),
    \nabla_{\bd{\xi}} u_n(\bd{\xi}, \bd{x})} + \tfrac{1}{2}
  E_n(\bd{\xi}) \average{u_n(\bd{\xi}, \cdot), \nabla_{\bd{\xi}}^2
    u_n(\bd{\xi}, \cdot)}.
\end{multline}
These identities \eqref{eq:diffE} and \eqref{eq:diffE2} will be useful
later.

\medskip

We shall now introduce the windowed Bloch transform. This is an analog
of the windowed Fourier transform (also known as the short time
Fourier transform) widely used in time-frequency signal analysis.
\begin{defn}\label{defwin}
  The windowed Bloch transform $\mc{W}: L^2(\RR^d) \to
  L^2(\Omega)^{\NN}$ is defined as
  \begin{equation}
    (\mc{W} f)_{n}(\bd{q},\bd{p}) =  \frac{2^{d/4}}{(2\pi)^{3d/4}} \braket{u_n(\bd{p}, \cdot) G_{\bd{q}, \bd{p}}, f} = \frac{2^{d/4}}{(2\pi)^{3d/4}} \int_{\RR^d} \wb{u}_n(\bd{p}, \bd{x})  \wb{G}_{\bd{q},\bd{p}}(\bd{x}) f(\bd{x}) \ud \bd{x},
  \end{equation}
  where $G_{\bd{q}, \bd{p}}$ is a Gaussian centered at $(\bd{q},
  \bd{p}) \in \Omega$, given by
  \begin{equation}
    G_{\bd{q},\bd{p}}(\bd{x}) =
    \exp\Bigl(- \frac{1}{2} \abs{\bd{x}-\bd{q}}^{2}
    +\I\bd{p}\cdot(\bd{x}-\bd{q})\Bigr).
  \end{equation}
  The adjoint operator $\mc{W}^{\ast}: L^2(\Omega)^{\NN} \to
  L^2(\RR^d)$ is then
  \begin{equation}
    (\mc{W}^{\ast} g)(\bd{x}) = \frac{2^{d/4}}{(2\pi)^{3d/4}} \sum_{n=1}^{\infty}
    \iint_{\Omega} u_n(\bd{p}, \bd{x}) G_{\bd{q},\bd{p}}(\bd{x})
    g_{n}(\bd{q},\bd{p})  \ud\bd{q}\ud\bd{p}.
  \end{equation}
\end{defn}

\begin{prop} The windowed Bloch transform and its adjoint satisfies
  \begin{equation}\label{eq:reconstruction}
    \mc{W}^{\ast} \mc{W} = \mathrm{Id}_{L^2(\RR^d)}.
  \end{equation}
\end{prop}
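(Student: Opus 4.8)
The plan is to prove the slightly stronger statement that $\mc{W}$ is an \emph{isometry} of $L^{2}(\RR^{d})$ into $L^{2}(\Omega)^{\NN}$. Since $\mc{W}^{\ast}$ is the Hilbert-space adjoint, once we know $\norm{\mc{W}f}^{2}=\norm{f}^{2}$ for all $f$, we get $\braket{\mc{W}^{\ast}\mc{W}f,f}=\braket{f,f}$, and polarization (using that $\mc{W}^{\ast}\mc{W}$ is self-adjoint) yields \eqref{eq:reconstruction}. The starting observation is that $\mc{W}$ factors through the ordinary Bloch transform: writing
\[
  \wb{G}_{\bd{q},\bd{p}}(\bd{x}) = e^{\I\bd{p}\cdot\bd{q}}\, e^{-\I\bd{p}\cdot\bd{x}}\, e^{-\frac{1}{2}\abs{\bd{x}-\bd{q}}^{2}},
  \qquad
  f_{\bd{q}}(\bd{x}) := e^{-\frac{1}{2}\abs{\bd{x}-\bd{q}}^{2}}\,f(\bd{x}),
\]
a direct comparison of Definition~\ref{defwin} with \eqref{eq:blochcoef} gives, after collecting the prefactor $2^{d/4}(2\pi)^{d/2}/(2\pi)^{3d/4}=\pi^{-d/4}$,
\[
  (\mc{W}f)_{n}(\bd{q},\bd{p}) = \pi^{-d/4}\, e^{\I\bd{p}\cdot\bd{q}}\,(\mc{B}f_{\bd{q}})_{n}(\bd{p}).
\]

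Next I would compute $\norm{\mc{W}f}^{2}=\sum_{n}\iint_{\Omega}\abs{(\mc{W}f)_{n}(\bd{q},\bd{p})}^{2}\ud\bd{q}\ud\bd{p}$. For each fixed $\bd{q}$, the Bloch-transform Parseval identity \eqref{eq:parseval} applied to $f_{\bd{q}}$ gives $\sum_{n}\int_{\Gamma^{\ast}}\abs{(\mc{B}f_{\bd{q}})_{n}(\bd{p})}^{2}\ud\bd{p}=\norm{f_{\bd{q}}}_{L^{2}(\RR^{d})}^{2}=\int_{\RR^{d}}e^{-\abs{\bd{x}-\bd{q}}^{2}}\abs{f(\bd{x})}^{2}\ud\bd{x}$, so that
\[
  \norm{\mc{W}f}^{2} = \pi^{-d/2}\int_{\RR^{d}}\abs{f(\bd{x})}^{2}\Bigl(\int_{\RR^{d}} e^{-\abs{\bd{x}-\bd{q}}^{2}}\ud\bd{q}\Bigr)\ud\bd{x} = \norm{f}_{L^{2}(\RR^{d})}^{2},
\]
where I used Tonelli to exchange the nonnegative integrals and $\int_{\RR^{d}}e^{-\abs{\bd{x}-\bd{q}}^{2}}\ud\bd{q}=\pi^{d/2}$. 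In particular this shows $\mc{W}f\in L^{2}(\Omega)^{\NN}$, so $\mc{W}$ is indeed well defined, and the proposition follows.

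An equivalent, more hands-on route verifies \eqref{eq:reconstruction} pointwise: substituting the definitions into $(\mc{W}^{\ast}\mc{W}f)(\bd{x})$, one first performs the Gaussian integral $\int_{\RR^{d}}G_{\bd{q},\bd{p}}(\bd{x})\wb{G}_{\bd{q},\bd{p}}(\bd{y})\ud\bd{q}=\pi^{d/2}\,e^{-\frac{1}{4}\abs{\bd{x}-\bd{y}}^{2}}\,e^{\I\bd{p}\cdot(\bd{x}-\bd{y})}$ (the $\bd{q}$-dependence of the phase cancels), which brings the overall constant to $(2\pi)^{-d}$; the remaining $n$-sum and $\bd{p}$-integral are then exactly the Bloch reconstruction \eqref{eq:blochdecomp}--\eqref{eq:blochcoef} of the function $h_{\bd{x}}(\bd{y}):=e^{-\frac{1}{4}\abs{\bd{x}-\bd{y}}^{2}}f(\bd{y})$ evaluated at the point $\bd{y}=\bd{x}$, and $h_{\bd{x}}(\bd{x})=f(\bd{x})$.

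The only point requiring care is the legitimacy of the interchanges of the band sum with the integrals (Fubini/Tonelli) and of applying Parseval to $f_{\bd{q}}$. I would first carry out all the above for $f$ in a dense class, say $\mathcal{S}(\RR^{d})$ (or $\mathcal{C}_{c}^{\infty}(\RR^{d})$), where every integrand is absolutely integrable and the Bloch data $u_{n}(\bd{\xi},\cdot)$ depend smoothly on $\bd{\xi}$ with bounds uniform on the compact torus $\Gamma^{\ast}$, so all the manipulations are justified; the isometry bound just obtained then extends \eqref{eq:reconstruction} to all $f\in L^{2}(\RR^{d})$ by density. This density step, rather than any hard estimate, is the substance of the argument — the computation itself is just a Gaussian integral together with Parseval.
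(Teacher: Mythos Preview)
Your proposal is correct. The ``hands-on route'' you sketch in the second paragraph is exactly the paper's proof: perform the Gaussian $\bd{q}$-integral to get $\pi^{d/2}e^{\I\bd{p}\cdot(\bd{x}-\bd{y})}e^{-\abs{\bd{x}-\bd{y}}^{2}/4}$, then recognize the remaining $n$-sum and $\bd{p}$-integral as the Bloch reconstruction \eqref{eq:blochdecomp} of $e^{-\abs{\bd{x}-\cdot}^{2}/4}f$ evaluated at $\bd{x}$.

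Your primary argument---showing directly that $\mc{W}$ is an isometry via the factorization $(\mc{W}f)_{n}(\bd{q},\bd{p})=\pi^{-d/4}e^{\I\bd{p}\cdot\bd{q}}(\mc{B}f_{\bd{q}})_{n}(\bd{p})$ and then invoking the Bloch Parseval identity \eqref{eq:parseval}---is a genuinely different and somewhat cleaner route. It avoids writing out $\mc{W}^{\ast}\mc{W}$ altogether and makes the well-definedness of $\mc{W}$ on $L^{2}$ immediate, at the modest cost of the polarization step at the end. The paper's direct computation, on the other hand, yields the pointwise reconstruction formula explicitly, which is what motivates the band-wise operators $\Pi_{n}^{\mc{W}}$ introduced immediately afterward. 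Both approaches rest on the same two ingredients (a Gaussian integral and the Bloch inversion/Parseval pair), just assembled in a different order.
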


\begin{remark}
  Similar to the windowed Fourier transform, the representation given
  by the windowed Bloch transform is redundant, so that $\mc{W}
  \mc{W}^{\ast} \not= \mathrm{Id}_{L^2(\Omega)^{\NN}}$. The
  normalization constant in the definition of $\mc{W}$ is also due to
  this redundancy.
\end{remark}
\begin{proof}
  Fix a $f \in L^2(\RR^d)$, by definition, we have
  \begin{equation*}
    \begin{aligned}
      (\mc{W}^{\ast} \mc{W} f)(\bd{x}) & =
      \frac{2^{d/2}}{(2\pi)^{3d/2}} \sum_{n=1}^{\infty} \iint_{\Omega}
      u_n\bigl(\bd{p}, \bd{x} \bigr) G_{\bd{q},
        \bd{p}}(\bd{x}) \average{ G_{\bd{q}, \bd{p}} u_n(\bd{p}, \cdot), f}  \ud \bd{q} \ud \bd{p} \\
      & = \frac{2^{d/2}}{(2\pi)^{3d/2}} \sum_{n=1}^{\infty}
      \iint_{\Omega} \int_{\RR^d} u_n\bigl(\bd{p}, \bd{x}\bigr)
      G_{\bd{q}, \bd{p}}(\bd{x}) \wb{G}_{\bd{q}, \bd{p}}(\bd{y})
      \wb{u}_n(\bd{p}, \bd{y}) f(\bd{y}) \ud \bd{y} \ud \bd{q}
      \ud \bd{p}.
    \end{aligned}
  \end{equation*}
  Let us integrate in $\bd{q}$ first.
  \begin{equation*}
    \begin{aligned}
      \int_{\RR^d} G_{\bd{q}, \bd{p}}(\bd{x}) \wb{G}_{\bd{q},
        \bd{p}}(\bd{y}) \ud \bd{q} & = e^{ \I \bd{p} \cdot ( \bd{x} -
        \bd{y})} \int_{\RR^d} e^{-\abs{\bd{x} - \bd{q}}^2/2
        - \abs{\bd{y} - \bd{q}}^2/2} \ud \bd{q} \\
      & = e^{ \I \bd{p} \cdot ( \bd{x} - \bd{y})}
      e^{-\abs{\bd{x}-\bd{y}}^2 / 4} \int_{\RR^d} \exp\biggl(-
      \Bigl\lvert \bd{q} - \frac{\bd{x} + \bd{y}}{2}
      \Bigr\rvert^2\biggr) \ud \bd{q} \\
      & = \pi^{d/2} e^{ \I \bd{p} \cdot ( \bd{x} -
        \bd{y})} e^{-\abs{\bd{x}-\bd{y}}^2 / 4}.
    \end{aligned}
  \end{equation*}
  Hence, denoting $\wt{f}_{\bd{x}}(\bd{y}) =
  e^{-\abs{\bd{x}-\bd{y}}^2 / 4 } f(\bd{y})$, we
  have
  \begin{equation*}
    \begin{aligned}
      (\mc{W}^{\ast} \mc{W} f)(\bd{x}) & = \frac{1}{(2\pi)^d}
      \sum_{n=1}^{\infty} \int_{\Gamma^{\ast}} \int_{\RR^d}
      u_n\bigl(\bd{p}, \bd{x}\bigr) e^{ \I \bd{p} \cdot (
        \bd{x} - \bd{y})} e^{-\abs{\bd{x}-\bd{y}}^2 / 4}
      \wb{u}_n(\bd{p}, \bd{y}) f(\bd{y}) \ud
      \bd{y} \ud \bd{p} \\
      & = \frac{1}{(2\pi)^d} \sum_{n=1}^{\infty}
      \int_{\Gamma^{\ast}} \int_{\RR^d} u_n\bigl(\bd{p},
      {\bd{x}}\bigr) e^{ \I \bd{p} \cdot ( \bd{x} -
        \bd{y})} \wb{u}_n(\bd{p}, \bd{y} )
      \wt{f}_{\bd{x}}(\bd{y}) \ud \bd{y} \ud \bd{p} \\
      & \stackrel{\eqref{eq:blochdecomp}}{=} \wt{f}_{\bd{x}}(\bd{x})
      = e^{-\abs{\bd{x} - \bd{x}}^2/4} f(\bd{x})
      = f(\bd{x}).
    \end{aligned}
  \end{equation*}
\end{proof}

The previous proposition motivates us to consider the contribution of
each band to the reconstruction formulae
\eqref{eq:reconstruction}. This gives to the operator $\Pi_n^{\mc{W}}:
L^2(\RR^d) \to L^2(\RR^d)$ for each $n \in \NN$
\begin{equation}
  (\Pi_n^{\mc{W}} f)(\bd{x}) = \frac{2^{d/4}}{(2\pi)^{3d/4}}
  \iint_{\Omega} u_n(\bd{p}, \bd{x}) G_{\bd{q},\bd{p}}(\bd{x})
  (\mc{W} f)_{n}(\bd{q},\bd{p})  \ud\bd{q}\ud\bd{p}.
\end{equation}
It follows from \eqref{eq:reconstruction} that $\sum_n \Pi_n^{\mc{W}} = \mathrm{Id}_{L^2(\RR^d)}$, while $\Pi_n^{\mc{W}}$ is not projection due to the redundancy of windowed Bloch transform.

\section{Formulation and main results}\label{sec:main_results}


Let us start with fixing some more notations. We will switch between
physical domain and phase space in the FGA formulation. For clarity,
we will use $\bd{x}, \bd{y}\in\mathbb{R}^{d}$ as spatial variables,
$(\bd{q},\bd{p})\in\mathbb{R}^{2d}$ as phase space variables. The
capital letters $\bd{X}$ and $\bd{Y}$ are shorthand notations for
$\bd{X}=\bd{x}/\veps$ and $\bd{Y}=\bd{y}/\veps$.

We define an effective (classical) Hamiltonian corresponding to each
energy band by
\begin{equation}
  h_{n}(\bd{q},\bd{p})=E_{n}(\bd{p})+U(\bd{q}).
\end{equation}
The associated Hamiltonian flow $\kappa_{n}(t)=(\bd{Q}_{n}(t,\bd{q},\bd{p}),\bd{P}_{n}(t,\bd{q},\bd{p}))$
solves
\begin{equation}\label{eq:Hamflow}
  \begin{cases}
  \displaystyle \dfrac{d\bd{Q}_{n}}{dt}=\nabla_{\bd{P}_{n}} h_n(\bd{Q_{n}},\bd{P}_{n}), \\[1em]
  \displaystyle \dfrac{d\bd{P}_{n}}{dt}=-\nabla_{\bd{Q}_{n}}
  \bd{Q}_{n},\bd{P}_{n})
  \end{cases}
\end{equation}
on $\Omega$ with initial conditions
$\bd{Q}_{n}(0,\bd{q},\bd{p})=\bd{q}$ and
$\bd{P}_{n}(0,\bd{q},\bd{p})=\bd{p}$.

From now on, we will use the short hand notation
$(\bd{Q}_{n},\bd{P}_{n})$ for
$(\bd{Q}_{n}(t,\bd{q},\bd{p}),\bd{P}_{n}(t,\bd{q},\bd{p}))$. For the
long time existence of the Hamiltonian flow \eqref{eq:Hamflow}, we
will assume that the external potential $U(\bd{x})$ is subquadratic as
below.
\begin{defn}\label{subquad} A potential $U$ is called \emph{subquadratic}, if
  $\norm{\partial_{\bd{x}}^{\alpha} U(\bd{x})}_{L^{\infty}}$ is finite
  for all multi-index $|\alpha|\geq 2$.
\end{defn}

\begin{remark}
  As a result, since the domain $\Gamma^{\ast}$ for $\bd{p}$ is
  bounded, the Hamiltonian $h_n$ is also subquadratic.
\end{remark}

The frozen Gaussian approximation will be formulated by the following Fourier
integral operator.
\begin{defn}(Fourier Integral Operator) For
  $u\in\mathcal{S}(\mathbb{R}^{2d}\times\Omega,\mathbb{C})$ and
  $\varphi\in \mathcal{S}(\mathbb{R}^{d},\mathbb{C})$ we define the
  Fourier Integral Operator with symbol $u$ by the oscillatory
  integral
  \begin{equation}
    [\mathcal{I}^{\veps}(u)\varphi](\bd{x}) =\dfrac{1}{(2\pi\veps)^{3d/2}}
    \iint_{\Omega}\int_{\mathbb{R}^{d}}
    e^{\frac{\I}{\veps}\Phi(t,\bd{x},\bd{y},\bd{q},\bd{p})}
    u(\bd{x},\bd{y},\bd{q},\bd{p})\varphi(\bd{y})
    \ud\bd{y}\ud\bd{q}\ud\bd{p}
  \end{equation}
  where the complex valued phase function $\Phi(t,\bd{x},\bd{y},\bd{q},\bd{p})$ is given by
  \begin{equation}
    \Phi(t,\bd{x},\bd{y},\bd{q},\bd{p})=S(t,\bd{q}, \bd{p})-\bd{p}\cdot (\bd{y}-\bd{q})+\bd{P}\cdot(\bd{x}-\bd{Q})+\dfrac{\I}{2}|\bd{y}-\bd{q}|^{2}+\dfrac{\I}{2}|\bd{x}-\bd{Q}|^{2}
  \end{equation}
  and $S(t,\bd{q},\bd{p})$ is a real-valued \emph{action} function associated
  to $\kappa$ satisfying
  \begin{equation}
    \nabla_{\bd{q}} S(t,\bd{q},\bd{p})=-\bd{p}+\nabla_{\bd{q}} \bd{Q}\cdot\bd{P}, \qquad
    \nabla_{\bd{p}}S(t,\bd{q},\bd{p})=\nabla_{\bd{p}} \bd{Q}\cdot\bd{P}.
  \end{equation}
\end{defn}
Note that if $\kappa(t) = \kappa_n(t)$, the action
$S_{n}(t,\bd{q},\bd{p})$ can be obtained by solving the
evolution equation
\begin{equation}
\dfrac{dS_{n}}{dt}=\bd{P}_{n}\cdot\nabla_{\bd{P}_{n}} h_{n}(\bd{Q}_{n},\bd{P}_{n})-h_{n}(\bd{Q}_{n},\bd{P}_{n}),
\end{equation}
with initial condition $S_{n}(0,\bd{q},\bd{p})=0$.

\smallskip

We are now ready to formulate the frozen Gaussian approximation.  The
FGA approximates the solution of the Schr\"odinger equation
\eqref{eq:schrodinger} on the $n$-th band to the leading order by
\begin{equation}
  \psi_{\FGA}^{\veps}(t, \bd{x}) = \left[\mathcal{I}^{\veps}\left(a_{n,0}(t,\bd{q},\bd{p})u_{n}(\bd{P}_{n},\dfrac{\bd{x}}{\veps})
      \wb{u}_{n}(\bd{p},\dfrac{\bd{y}}{\veps})\right)\psi_{0}^{\veps}\right](\bd{x}),
\end{equation}
where $\psi_0^{\veps}$ is the initial condition.  More explicitly, at
time $t$, $\psi_{\FGA}^{\veps}$ is given by
\begin{multline}\label{eq:FGAgaussian}
  \psi_{\FGA}^{\veps}(t, \bd{x}) = \frac{1}{(2\pi \veps)^{3d/2}}
  \iint_{\Omega}  a_{n, 0}(t, \bd{q}, \bd{p}) e^{\I S_n (t,\bd{q},\bd{p}) / \veps}
  G^{\veps}_{\bd{Q}_n, \bd{P}_n}(\bd{x}) u_n(\bd{P}_n, \bd{x}/ \veps) \\
\cdot \average{G^{\veps}_{\bd{q}, \bd{p}} u_n(\bd{p}, \cdot / \veps), \psi_0^{\veps}} \ud \bd{q} \ud \bd{p}.
\end{multline}
Here and in the sequel, we use the short-hand notation for
Gaussians with semiclassical scaling
\begin{equation}\label{eq:rescaledG}
  G_{\bd{q},\bd{p}}^{\veps}(\bd{x}):=\exp\left(- \frac{|\bd{x}-\bd{q}|^{2}}{2\veps} +\I \frac{\bd{p}\cdot(\bd{x}-\bd{q})}{\veps}\right),
\end{equation}
where the subscripts $(\bd{q}, \bd{p})$ indicate the center of the
Gaussian in phase space. Note that the semiclassical Fourier transform of
$G_{\bd{q}, \bd{p}}^{\veps}$ is
\begin{equation}
  \wh{G}_{\bd{q},\bd{p}}^{\veps}(\bd{\xi})=
  \frac{1}{(2 \pi \veps)^{d/2}} \int_{\RR^d}
  G_{\bd{q}, \bd{p}}^{\veps}(\bd{x}) e^{-\I \bd{\xi} \cdot \bd{x} / \veps} \ud \bd{x}
  = \exp\left(-\frac{|\bd{\xi}-\bd{p}|^{2}}{2\veps}
    +\I\frac{\bd{q}\cdot(\bd{\xi}-\bd{p})}{\veps}\right).
\end{equation}
Correspondingly, the semiclassical windowed Bloch transform $\mc{W}^{\veps}:
L^2(\RR^d) \to L^2(\Omega)^{\NN}$ is defined as
\begin{equation}
  (\mc{W}^{\veps} f)_{n}(\bd{q},\bd{p}) =  \frac{2^{d/4}}{(2\pi\veps)^{3d/4}} \braket{u_n(\bd{p}, \cdot / \veps) G^{\veps}_{\bd{q}, \bd{p}}, f} = \frac{2^{d/4}}{(2\pi\veps)^{3d/4}} \int_{\RR^d} \wb{u}_n(\bd{p}, \bd{x}/\veps)  \wb{G}^{\veps}_{\bd{q},\bd{p}}(\bd{x}) f(\bd{x}) \ud \bd{x}.
\end{equation}
Similarly we also have the  operator $\Pi_n^{\mc{W}, \veps}:
L^2(\RR^d) \to L^2(\RR^d)$ for each $n \in \NN$ with semiclassical scaling
\begin{equation}
  (\Pi_n^{\mc{W}, \veps} f)(\bd{y}) = \frac{2^{d/4}}{(2\pi\veps)^{3d/4}}
  \iint_{\Omega} u_n(\bd{\xi}, \bd{y}/\veps) G^{\veps}_{\bd{x},\bd{\xi}}(\bd{y})
  (\mc{W}^{\veps} f)_{n}(\bd{x},\bd{\xi})  \ud\bd{x}\ud\bd{\xi}.
\end{equation}
It follows from \eqref{eq:reconstruction} and a change of variable
that $\sum_n \Pi_n^{\mc{W}, \veps} = \mathrm{Id}_{L^2(\RR^d)}$.

The only term in \eqref{eq:FGAgaussian} that remains to be specified is the amplitude
$a_{n,0}(t,\bd{q},\bd{p})$. It solves the evolution equation
\begin{multline}
  \partial_{t} a_{n, 0}=-\I a_{n, 0}
  \mathcal{A}_n(\bd{P}_{n}) \cdot\nabla
  U(\bd{Q}_{n}) +
  \dfrac{1}{2} a_{n, 0} \tr \left(\partial_{\bd{z}}\bd{P}_{n}\,\nabla^2 E(\bd{P}_{n}) \bigl(Z_{n}\bigr)^{-1}\right)\\
  -\dfrac{\I}{2} a_{n, 0} \tr
  \left(\partial_{\bd{z}}\bd{Q}_{n} \, \nabla^2
    U(\bd{Q}_{n})
    \bigl(Z_{n}\bigr)^{-1}\right),
\end{multline}
with initial conditions $a_{n,0}(0,\bd{q},\bd{p})=2^{d/2}$ for each
$(\bd{q},\bd{p})$ and we recall that
$\mathcal{A}_n(\bd{\xi})=\braket{u_{n}(\bd{\xi},\cdot),\I\nabla_{\bd{\xi}}u_{n}(\bd{\xi},\cdot)}$
is the Berry phase.  Here the matrix $Z$ associated with the
Hamiltonian flow $\kappa_n(t)$ is defined by
\begin{equation}
  Z_{n}(t,q,p):=\partial_{\bd{z}}\left(\bd{Q}_{n} +\I\bd{P}_{n}\right),
\end{equation}
where $\partial_{\bd{z}}:=\partial_{\bd{q}}-\I\partial_{\bd{p}}$.


We now state the main results of this work.
\begin{theorem}\label{theoremA}
  Assume that the $n$-th Bloch band $E_{n}(\bd{\xi})$ does not
  intersect any other Bloch bands for all $\bd{\xi}\in \Gamma^{\ast}$
  and the Hamiltonian $h_n(\bd{x}, \bd{\xi})$ is subquadratic. Let
  $\mathscr{U}^{\veps}_t$ be the propagator of the time-dependent
  Schr\"{o}dinger equation~\eqref{eq:schrodinger} with initial condition $\psi_{0}^{\veps}\in L^{2}(\mathbb{R}^{d})$.
  Then for any given $T$, $0 \leq t \leq T$ and sufficiently small
  $\veps$, we have
  \begin{equation}\label{eq:theoremA}
    \sup_{0\leq t\leq T} \, \Bigl\lVert\, \mathscr{U}^{\veps}_t \bigl( \Pi^{\mc{W}, \veps}_n \psi_0^{\veps}\bigr) -
      \mathcal{I}^{\veps}\bigl(
        a_{n,0}u_{n}(\bd{P}_{n},\bd{x}/\veps)
        \wb{u}_{n}(\bd{p},\bd{y}/\veps)\bigr)
    \psi_{0}^{\veps} \, \Bigr\rVert_{L^{2}}
    \leq C_{T, n} \, \veps\, \bigl\lVert \psi_{0}^{\veps} \bigr\rVert_{L^2}.
  \end{equation}
\end{theorem}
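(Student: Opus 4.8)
The plan is to follow the by-now standard route for convergence of frozen Gaussian type approximations --- uniform $L^2$-boundedness of the relevant oscillatory integral operators, matching of the initial data, an $O(\veps^2)$ consistency (residual) estimate, and a Duhamel argument --- carried out in the Bloch framework. First I would collect the structural estimates that make everything uniform: since $h_n$ is subquadratic and $\Gamma^\ast$ is compact, Gr\"onwall's lemma applied to \eqref{eq:Hamflow} and its variational equations gives, for each $T$, uniform bounds on $\bd{Q}_n,\bd{P}_n,S_n,a_{n,0}$ and all their $(\bd{q},\bd{p})$-derivatives on $[0,T]\times\Omega$; the map $(\bd{q},\bd{p})\mapsto(\bd{Q}_n,\bd{P}_n)$ is symplectic (unit Jacobian), and since $Z_n(0)=2\,\Id$ one checks that $Z_n(t)$ stays invertible with $\norm{Z_n^{-1}}$ bounded; the band-separation hypothesis makes $u_n(\bd{\xi},\cdot)$ smooth in $\bd{\xi}$ with all derivatives bounded in $L^2(\Gamma)$ and in $L^\infty$. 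I would then show that for symbols of the form $u=b(t,\bd{q},\bd{p})\,v(\bd{P}_n,\bd{x}/\veps)\,\wb{u}_n(\bd{p},\bd{y}/\veps)$ with $b$ and $v(\bd{\xi},\cdot)$ bounded together with their derivatives, $\mathcal{I}^\veps(u)$ is bounded on $L^2(\RR^d)$ uniformly in $\veps\in(0,1]$ and $t\in[0,T]$: using $\braket{G^\veps_{\bd{q},\bd{p}}u_n(\bd{p},\cdot/\veps),\psi_0}=\tfrac{(2\pi\veps)^{3d/4}}{2^{d/4}}(\mc{W}^\veps\psi_0)_n(\bd{q},\bd{p})$ one factors $\mathcal{I}^\veps(u)\psi_0=\mc{T}^\veps\bigl((\mc{W}^\veps\psi_0)_n\bigr)$ with $\mc{T}^\veps$ a reconstruction operator along the flowed Gaussians, and since $\mc{W}^\veps$ is an isometry (by the computation behind \eqref{eq:reconstruction}) it suffices to bound $\mc{T}^\veps$; this I would do by a $TT^\ast$ argument, estimating the kernel of $(\mc{T}^\veps)^\ast\mc{T}^\veps$ (an $\bd{x}$-Gaussian integral, after absorbing the periodic factors) by $C(2\pi\veps)^{-d}e^{-c(\abs{\bd{Q}_n-\bd{Q}_n'}^2+\abs{\bd{P}_n-\bd{P}_n'}^2)/\veps}$, so that the symplectic change of variables turns Schur's test into a uniform bound.

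Next, at $t=0$ one has $\bd{Q}_n=\bd{q}$, $\bd{P}_n=\bd{p}$, $S_n=0$, $a_{n,0}=2^{d/2}$, so \eqref{eq:FGAgaussian} is exactly $\Pi_n^{\mc{W},\veps}\psi_0^\veps$ and the error $e^\veps(t):=\psi_{\FGA}^\veps(t)-\mathscr{U}^\veps_t\Pi_n^{\mc{W},\veps}\psi_0^\veps$ vanishes at $t=0$. The heart of the proof is to show $\sup_{[0,T]}\norm{r^\veps(t)}_{L^2}\le C_{T,n}\veps^2\norm{\psi_0^\veps}_{L^2}$ for $r^\veps:=(\I\veps\partial_t-H^\veps)\psi_{\FGA}^\veps$ with $H^\veps=-\tfrac{\veps^2}{2}\Delta+V(\cdot/\veps)+U$. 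Differentiating under the integral and writing the $\bd{x}$-part of the integrand as $a_{n,0}e^{\I(S_n-\bd{P}_n\cdot\bd{Q}_n)/\veps}e^{-\abs{\bd{x}-\bd{Q}_n}^2/(2\veps)}\Psi_n(\bd{P}_n,\bd{x}/\veps)$ with $\Psi_n(\bd{\xi},\bd{y})=e^{\I\bd{\xi}\cdot\bd{y}}u_n(\bd{\xi},\bd{y})$, the scaled Bloch equation $(-\tfrac{\veps^2}{2}\Delta+V(\cdot/\veps))\Psi_n(\bd{P}_n,\cdot/\veps)=E_n(\bd{P}_n)\Psi_n(\bd{P}_n,\cdot/\veps)$ from \eqref{eq:Bloch} and a Taylor expansion of $U$ about $\bd{Q}_n$ organize $r^\veps$ into: an order-$\veps^0$ part constant in $(\bd{x}-\bd{Q}_n)$, which cancels by the action equation $\dot S_n=\bd{P}_n\cdot\nabla_{\bd{P}_n}h_n-h_n$ and $\dot{\bd{Q}}_n=\nabla E_n(\bd{P}_n)$; a part linear in $(\bd{x}-\bd{Q}_n)$ which collapses, via $\dot{\bd{P}}_n=-\nabla U(\bd{Q}_n)$ and \eqref{eq:diffE}, to $(\bd{x}-\bd{Q}_n)$ paired with $\nabla_{\bd{y}}u_n$ and $\braket{u_n,\nabla_{\bd{y}}u_n}$; quadratic and cubic terms in $(\bd{x}-\bd{Q}_n)$; and genuine $O(\veps)$ terms from $\I\veps\partial_t$ hitting $a_{n,0}$ and $u_n(\bd{P}_n,\cdot/\veps)$, and from the Laplacian hitting the Gaussian envelope.

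The key mechanism is the phase identity, obtained from $\nabla_{\bd{q}}S_n=-\bd{p}+\nabla_{\bd{q}}\bd{Q}_n\cdot\bd{P}_n$ and $\nabla_{\bd{p}}S_n=\nabla_{\bd{p}}\bd{Q}_n\cdot\bd{P}_n$ after cancellations, that $\partial_{\bd{z}}\Phi=-\I Z_n(\bd{x}-\bd{Q}_n)$, so that $(\bd{x}-\bd{Q}_n)e^{\I\Phi/\veps}=\veps\,Z_n^{-1}\partial_{\bd{z}}e^{\I\Phi/\veps}$; integrating by parts in $(\bd{q},\bd{p})$ trades each factor $(\bd{x}-\bd{Q}_n)$ for a power of $\veps$ and a $\partial_{\bd{z}}$ landing on the bounded, smooth symbol. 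Pushing every $(\bd{x}-\bd{Q}_n)$-polynomial term down this way, one is left at order $\veps$ with a symbol $\bigl(\I\partial_t a_{n,0}\,u_n(\bd{P}_n,\cdot)+a_{n,0}\,\mc{R}\bigr)\wb{u}_n(\bd{p},\cdot/\veps)$, where $\mc{R}$ gathers the Berry-phase term $\I\mc{A}_n(\bd{P}_n)\cdot\nabla U(\bd{Q}_n)\,u_n$, the two trace terms in $Z_n^{-1},\nabla^2E_n,\nabla^2U$, and pieces orthogonal to $u_n$ in $L^2(\Gamma)$. Using Bloch orthonormality $\braket{u_m(\bd{\xi},\cdot),u_n(\bd{\xi},\cdot)}=\delta_{mn}$ together with the completeness \eqref{eq:blochdecomp}, only the $u_n$-component of the reconstruction symbol survives to leading order, so the orthogonal pieces of $\mc{R}$ contribute only at order $\veps^2$; the $u_n$-component of $\mc{R}$ is computed from \eqref{eq:diffE} and \eqref{eq:diffE2}, and the order-$\veps$ symbol vanishes identically precisely because $a_{n,0}$ solves its transport equation. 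Feeding the remaining symbols into the $L^2$-bounds of the first step yields the stated $O(\veps^2)$ residual bound. Finally, since $H^\veps$ is self-adjoint, $\mathscr{U}^\veps_t$ is unitary on $L^2$, and from $\I\veps\partial_t e^\veps=H^\veps e^\veps+r^\veps$ with $e^\veps(0)=0$, Duhamel's formula gives $\norm{e^\veps(t)}_{L^2}\le\veps^{-1}\int_0^t\norm{r^\veps(s)}_{L^2}\ud s\le C_{T,n}T\veps\norm{\psi_0^\veps}_{L^2}$, which is \eqref{eq:theoremA}. I expect the genuine obstacle to be exactly the last part of the consistency estimate --- verifying that after all the phase-space integrations by parts every surviving order-$\veps$ contribution is either absorbed by the transport equation for $a_{n,0}$ or pushed to order $\veps^2$ by projecting off the $u_n$-orthogonal directions --- since this is where the identities \eqref{eq:diffE}, \eqref{eq:diffE2}, the definitions of $\mc{A}_n$ and $Z_n$, and the smooth gauge for $u_n$ guaranteed by band separation must fit together exactly.
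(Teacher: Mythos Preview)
Your overall architecture --- initial matching, $L^2$-boundedness of the FIO, a consistency estimate, and Duhamel --- is exactly the route the paper takes, and your handling of the integration-by-parts identity $\partial_{\bd{z}}\Phi=-\I Z_n(\bd{x}-\bd{Q}_n)$ and of the $u_n$-parallel part of the order-$\veps$ residual (which is killed by the transport equation for $a_{n,0}$) is correct. The gap is precisely where you flag the difficulty yourself: the sentence ``only the $u_n$-component of the reconstruction symbol survives to leading order, so the orthogonal pieces of $\mc{R}$ contribute only at order $\veps^2$'' is not justified and is in fact false as stated. Bloch orthonormality $\braket{u_m(\bd{\xi},\cdot),u_n(\bd{\xi},\cdot)}_{L^2(\Gamma)}=\delta_{mn}$ lives in the \emph{fast} variable $\bd{X}=\bd{x}/\veps$, whereas the $L^2$-norm you need to control is in $\bd{x}$; after the $(\bd{q},\bd{p})$-integrations by parts you are left with $\veps\,\mathcal{I}^{\veps}\bigl(\mc{R}^{\perp}(\bd{P}_n,\bd{X})\,\wb{u}_n(\bd{p},\bd{Y})\bigr)\psi_0^{\veps}$, and the FIO bound (your $TT^{\ast}$/Schur step, the paper's Lemma~\ref{FIObound}) only gives this size $C\veps\norm{\psi_0^{\veps}}_{L^2}$, not $C\veps^2\norm{\psi_0^{\veps}}_{L^2}$. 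There is no mechanism by which orthogonality in the cell variable buys an extra factor of $\veps$ in the operator norm. Plugging an $O(\veps)$ residual into Duhamel then yields only an $O(1)$ error, and the argument collapses.

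The paper closes this gap not by a projection argument but by upgrading the ansatz: it applies the Schr\"odinger operator to $\mathcal{I}^{\veps}\bigl((b_{n,0}+\veps b_{n,1})\,\wb{u}_n(\bd{p},\bd{Y})\bigr)$, where $b_{n,1}=a_{n,1}u_n(\bd{P}_n,\bd{X})+b_{n,1}^{\perp}$ and $b_{n,1}^{\perp}$ is chosen so that $L_0^n(b_{n,1}\wb{u}_n)=-L_1^n(b_{n,0}\wb{u}_n)$, i.e.\ $b_{n,1}^{\perp}\wb{u}_n=-\bigl(L_0^n\bigr)^{-1}\mc{Q}\,L_1^n(b_{n,0}\wb{u}_n)$ with $\mc{Q}=\Id-\Pi_n$. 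Here the band-gap hypothesis is used not merely to make $u_n$ smooth in $\bd{\xi}$ but to invert $L_0^n=-H_{\bd{P}_n}+E_n(\bd{P}_n)$ on $\ran\mc{Q}$, via \eqref{eq:L0inv}, with uniform bounds (Proposition~\ref{perpbounds}). With this corrector in place the entire $O(\veps)$ layer of the residual cancels exactly (equations \eqref{eq:vanishingO(1)}--\eqref{eq:vanishingO(2)}), leaving an honest $O(\veps^2)$ remainder; Duhamel then gives an $O(\veps)$ error between $\mathscr{U}^{\veps}_t$ and the \emph{corrected} FIO, and a final application of the FIO bound to $\veps\,\mathcal{I}^{\veps}(b_{n,1}\wb{u}_n)$ brings you back to the leading-order $\psi_{\FGA}^{\veps}$ at the cost of another $O(\veps)$. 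In short, you need to \emph{construct} $b_{n,1}^{\perp}$, not argue it away.
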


\begin{remark}
  Note that the FGA solution approximates the time evolution of
  $\Pi_n^{\mc{W}, \veps} \psi_0^{\veps}$, which is the $n$-th band
  contribution to the initial condition in the reconstruction formulae
  \eqref{eq:reconstruction}. In particular, if the initial condition is
  concentrated on the $n$-th band in the sense that $\psi_{0}^{\veps}
  = \Pi^{\mc{W}, \veps}_{n}\psi_{0}^{\veps}$, the theorem states
  that the solution to \eqref{eq:schrodinger} is approximated by the
  FGA solution with $\Or(\veps)$ error.
\end{remark}

\begin{remark}
  We can also construct higher order approximations by replacing the
  term $a_{n,0}u_{n}(\bd{P}_{n},\bd{x}/\veps)$ with an
  $\veps$-expansion of the form $b_{n,0} +\veps
  b_{n,1}+\veps^{2}b_{n,2}+\ldots + \veps^{N-1} b_{n, N-1}$ where
  $b_{n,0} = a_{n,0}u_{n}(\bd{P}_{n},\bd{x}/\veps)$. This will give an
  approximate solution $\psi_{\FGA}^{\veps,N}$ to $\Or(\veps^N)$
  accuracy.  In this paper we shall focus on the first order
  approximation and omit the formulation and proof for higher orders.
\end{remark}

\begin{remark}
  Let us also remark that while we take the more explicit approach of
  using Bloch waves in a modified FGA ansatz for periodic media, as in
  \eqref{eq:FGAgaussian}. The same approximation can be also derived
  by first projecting the whole Schr\"odinger equation using a
  super-adiabatic projection as developed in
  \cite{PaSpTe:03,PaSpTe:06} and then apply the frozen Gaussian
  approximation to the resulting dynamics. We will not go into the
  details in this work.
\end{remark}

The proof of Theorem~\ref{theoremA} is given in Section
\ref{sec:proof}. By linearity of \eqref{eq:schrodinger}, we have the
following more general statement, as an easy corollary from
Theorem~\ref{theoremA}.
\begin{theorem}\label{theoremB}
  Assume that the first $N$ Bloch bands $E_{n}(\bd{\xi})$,
  $n=1,\cdots,N$ do not intersect and are separated from the other
  bands for all $\bd{\xi}\in \Gamma^{\ast}$; and assume that the
  Hamiltonian $h_n(\bd{x}, \bd{\xi})$ is subquadratic. Let
  $\mathscr{U}^{\veps}_t$ be the propagator of the time-dependent
  Schr\"{o}dinger equation~\eqref{eq:schrodinger} with initial condition $\psi_{0}^{\veps}\in L^{2}(\mathbb{R}^{d})$. Then for any given
  $T$, $0 \leq t \leq T$ and sufficiently small $\veps$, we have
  \begin{multline}
    \sup_{0\leq t\leq T} \, \biggl\lVert\,\mathscr{U}^{\veps}_t
    \psi_0^{\veps} -
    \sum_{n=1}^{N}\mathcal{I}^{\veps}\bigl(
    a_{n,0}u_{n}(\bd{P}_{n},\bd{x}/\veps)
    \wb{u}_{n}(\bd{p},\bd{y}/\veps)\bigr)
    \psi_{0}^{\veps} \, \biggr\rVert_{L^{2}} \\
    \leq C_{T, N}\, \veps\bigl\lVert
    \psi_{0}^{\veps}\bigr\rVert_{L^2}+ \norm{ \psi_0^{\veps} -
      \sum_{n=1}^N \Pi^{\mc{W}, \veps}_n \psi_0^{\veps}}_{L^2}.
  \end{multline}
\end{theorem}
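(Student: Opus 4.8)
The plan is to deduce Theorem~\ref{theoremB} from Theorem~\ref{theoremA} by a decomposition-and-triangle-inequality argument. First I would write the initial data as $\psi_0^\veps = \sum_{n=1}^N \Pi_n^{\mc{W},\veps}\psi_0^\veps + r^\veps$, where $r^\veps := \psi_0^\veps - \sum_{n=1}^N \Pi_n^{\mc{W},\veps}\psi_0^\veps$ is the residual from the bands outside the first $N$. Applying the linear propagator $\mathscr{U}_t^\veps$ and using linearity of \eqref{eq:schrodinger}, we get
\begin{equation*}
  \mathscr{U}_t^\veps \psi_0^\veps = \sum_{n=1}^N \mathscr{U}_t^\veps \bigl(\Pi_n^{\mc{W},\veps}\psi_0^\veps\bigr) + \mathscr{U}_t^\veps r^\veps.
\end{equation*}
Since $\mathscr{U}_t^\veps$ is unitary on $L^2(\RR^d)$, $\norm{\mathscr{U}_t^\veps r^\veps}_{L^2} = \norm{r^\veps}_{L^2}$, which is exactly the last term on the right-hand side of the claimed estimate.

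Next, for the main sum, I would subtract and add the FGA approximants termwise:
\begin{equation*}
  \mathscr{U}_t^\veps \psi_0^\veps - \sum_{n=1}^N \mathcal{I}^\veps\bigl(a_{n,0} u_n(\bd{P}_n,\bd{x}/\veps)\wb{u}_n(\bd{p},\bd{y}/\veps)\bigr)\psi_0^\veps = \sum_{n=1}^N \Bigl[ \mathscr{U}_t^\veps\bigl(\Pi_n^{\mc{W},\veps}\psi_0^\veps\bigr) - \mathcal{I}^\veps\bigl(a_{n,0} u_n(\bd{P}_n,\bd{x}/\veps)\wb{u}_n(\bd{p},\bd{y}/\veps)\bigr)\psi_0^\veps \Bigr] + \mathscr{U}_t^\veps r^\veps.
\end{equation*}
Here I should be slightly careful about whether the FGA operator $\mathcal{I}^\veps$ applied to $\psi_0^\veps$ equals $\mathcal{I}^\veps$ applied to $\Pi_n^{\mc{W},\veps}\psi_0^\veps$ plus a harmless piece; inspecting \eqref{eq:FGAgaussian}, the factor $\average{G_{\bd{q},\bd{p}}^\veps u_n(\bd{p},\cdot/\veps),\psi_0^\veps}$ is precisely (up to the normalization constant) $(\mc{W}^\veps\psi_0^\veps)_n(\bd{q},\bd{p})$, and the $n$-th band FGA ansatz only sees $\psi_0^\veps$ through this windowed Bloch coefficient, which is insensitive to whether one inserts $\Pi_n^{\mc{W},\veps}$ — so the termwise matching in the display above is exact, not just approximate. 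Then by the triangle inequality and by applying Theorem~\ref{theoremA} to each $n \in \{1,\dots,N\}$ (each $E_n$ being separated by hypothesis), each bracketed term is bounded by $C_{T,n}\veps\norm{\psi_0^\veps}_{L^2}$; summing over $n$ and setting $C_{T,N} := \sum_{n=1}^N C_{T,n}$ gives the first term on the right-hand side, and the unitarity bound on $\mathscr{U}_t^\veps r^\veps$ gives the second. Taking $\sup_{0\le t\le T}$ throughout completes the argument.

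There is essentially no hard part here — the statement is flagged as "an easy corollary" and the proof is just linearity plus unitarity plus the triangle inequality. The only point demanding a moment of care is the bookkeeping noted above: confirming that $\mathcal{I}^\veps(\,\cdot\,)\psi_0^\veps$ genuinely decomposes along the windowed Bloch band structure so that the termwise cancellation is exact, and that each $C_{T,n}$ from Theorem~\ref{theoremA} is finite (which it is, since the $n$-th band is separated and $h_n$ is subquadratic). I would state this as a one- or two-sentence remark rather than belabor it.
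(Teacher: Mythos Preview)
Your approach is correct and is exactly the paper's: decompose $\psi_0^\veps$ along the first $N$ windowed Bloch components plus a remainder, apply Theorem~\ref{theoremA} bandwise, use the triangle inequality, and invoke unitarity of $\mathscr{U}_t^\veps$ on the remainder.

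One small correction to your commentary, not to the logic: the aside claiming that $(\mc{W}^\veps\psi_0^\veps)_n$ is ``insensitive to whether one inserts $\Pi_n^{\mc{W},\veps}$'' is both unnecessary and not actually true, since $\Pi_n^{\mc{W},\veps}$ is explicitly \emph{not} a projection (the paper notes $\mc{W}\mc{W}^\ast \neq \Id$). Fortunately you do not need this: the displayed identity holds purely by linearity of $\mathscr{U}_t^\veps$, and Theorem~\ref{theoremA} already compares $\mathscr{U}_t^\veps(\Pi_n^{\mc{W},\veps}\psi_0^\veps)$ against $\mathcal{I}^\veps(\cdot)\psi_0^\veps$ in exactly the form your brackets require, so no such invariance is invoked. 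Drop that sentence and the proof is clean.
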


\begin{proof}
  Taking the short-hand notation $\psi_{0, n}^{\veps} = \Pi^{\mc{W},
    \veps}_n \psi_0^{\veps}$ and $\mathscr{V}_{t,n}^{\veps} =
  \mathcal{I}^{\veps}\bigl(a_{n,0}
  u_{n}(\bd{P}_{n},\bd{x}/\veps)
  \wb{u}_{n}(\bd{p},\bd{y}/\veps)\bigr)$, we have
  \begin{align*}
    \norm{\mathscr{U}^{\veps}_t\psi_{0}^{\veps}-\sum_{n=1}^{N}\mathscr{V}_{t,n}^{\veps}\psi_{0}^{\veps}}_{L^{2}}&=
    \norm{\mathscr{U}^{\veps}_t\left(\sum_{n=1}^{\infty}\psi_{0,n}^{\veps}\right)-\sum_{n=1}^{N}\mathscr{V}_{t,n}^{\veps}\psi_{0}^{\veps}}_{L^{2}}\\
    &=\norm{\mathscr{U}^{\veps}_t\left(\sum_{n=1}^{N}\psi_{0,n}^{\veps}\right)+\mathscr{U}^{\veps}_t\left(\sum_{n=N+1}^{\infty}\psi_{0,n}^{\veps}\right)-\sum_{n=1}^{N}\mathscr{V}_{t,n}^{\veps}\psi_{0}^{\veps}}_{L^{2}}\\
    &\leq\norm{\mathscr{U}^{\veps}_t\left(\sum_{n=1}^{N}\psi_{0,n}^{\veps}\right)-\sum_{n=1}^{N}\mathscr{V}_{t,n}^{\veps}\psi_{0}^{\veps}}_{L^{2}}+
    \norm{\mathscr{U}^{\veps}_t\left(\sum_{n=N+1}^{\infty}\psi_{0,n}^{\veps}\right)}_{L^{2}}\\
    &\stackrel{\eqref{eq:theoremA}}{\leq} \sum_{n=1}^{N}C_{T, n} \veps
    \norm{\psi_{n,0}^{\veps}}_{L^{2}}+
    \norm{\sum_{n=N+1}^{\infty}\psi_{0}^{\veps}}_{L^{2}} \\
    &\leq C_{T, N}\veps\norm{\psi_{0}^{\veps}}_{L^{2}} + \norm{
      \psi_0^{\veps} - \sum_{n=1}^N \Pi^{\mc{W}, \veps}_n
      \psi_0^{\veps}}_{L^2}.
\end{align*}
\end{proof}

\section{Analysis of frozen Gaussian approximation in periodic media}\label{sec:proof}

\subsection{Initial condition}\label{ICsection}

Let us first study the initial condition for the frozen Gaussian approximation. At time $t = 0$, observe that by setting $t = 0$ in
\eqref{eq:FGAgaussian} we have
\begin{equation*}
    \psi^{\veps}_{FGA, n}(0,\bd{x}) = \frac{2^{d/2}}{(2\pi
      \veps)^{3d/2}} 
    \iint_{\Omega} u_n\bigl(\bd{p}, {\bd{x}}/{\veps}\bigr) G^{\veps}_{\bd{q},
      \bd{p}}(\bd{x}) \average{ G^{\veps}_{\bd{q}, \bd{p}} u_n(\bd{p}, \cdot /
      \veps), \psi_0^{\veps}}  \ud \bd{q} \ud \bd{p} = \Pi^{\mc{W}, \veps}_n \psi_0^{\veps}
\end{equation*}
by definition of the operator $\Pi^{\mc{W}, \veps}_n$. Hence, the FGA solution matches $\Pi^{\mc{W}, \veps}_n \psi_0^{\veps}$ at $t = 0$.

\subsection{Estimates of the Hamiltonian flows}

To control the error for $t > 0$, we collect here some preliminary
results on the estimate of quantities associated with the Hamiltonian
flows. We will assume throughout the rest of the paper that the
assumptions of Theorem~\ref{theoremA} hold for a fixed Bloch band $n$.

The following notation is useful in the proof.  For
$u\in\mathcal{C}^{\infty}(\Omega,\mathbb{C})$, we define for
$k\in\mathbb{N}$,
\begin{equation}
M_{k}[u]=\displaystyle\max_{|\alpha_{q}|+|\alpha_{p}|\leq k}\displaystyle\sup_{(\bd{q},\bd{p})\in\Omega}\left|\partial_{\bd{q}}^{\alpha_{q}}\partial_{\bd{p}}^{\alpha_{p}}u(\bd{q},\bd{p})\right|
\end{equation}
where $\alpha_{q}$ and $\alpha_{p}$ are multi-indices corresponding to
$\bd{q}$ and $\bd{p}$, respectively.

\begin{defn}(Canonical Transformation) Let
  $\kappa:\mathbb{R}^{2d}\rightarrow \mathbb{R}^{2d}$ be a
  differentiable map $\kappa(\bd{q},\bd{p}) =
  (\bd{Q}(\bd{q},\bd{p}), \bd{P}(\bd{q},\bd{p}))$
  and denote the Jacobian matrix as
  \begin{equation}
    (F)=\begin{pmatrix}
      (\partial_{\bd{q}}\bd{Q})^{T}(\bd{q},\bd{p}) & (\partial_{\bd{p}}\bd{Q})^{T}(\bd{q},\bd{p})\\
      (\partial_{\bd{q}}\bd{P})^{T}(\bd{q},\bd{p}) & (\partial_{\bd{p}}\bd{P})^{T}(\bd{q},\bd{p})
    \end{pmatrix}.
  \end{equation}
  We say $\kappa$ is a \emph{canonical transformation} if $F$
  is symplectic for any $(\bd{q},\bd{p})\in \mathbb{R}^{2d}$, i.e.
  \begin{equation}
    \left(F\right)^{T} \begin{pmatrix}
      0 & \Id_{d}\\
      -\Id_{d} & 0
    \end{pmatrix}
    F= \begin{pmatrix}
      0 & \Id_{d}\\
      -\Id_{d} & 0
    \end{pmatrix}.
  \end{equation}
\end{defn}

It is easy to check by the definition that the map
$\kappa_{n}(t):\mathbb{R}^{2d}\rightarrow \mathbb{R}^{2d}$ defined by
$(\bd{q},\bd{p})\rightarrow (\bd{Q}_{n}(t,\bd{q},\bd{p}),
\bd{P}_{n}(t,\bd{q},\bd{p}))$ solving \eqref{eq:Hamflow} is a
canonical transformation.

\begin{prop}\label{kappabounds}
  We have for all $k\geq 0$
  \begin{equation}\label{eq:kappaassumptions}
    \displaystyle\sup_{t\in [0,T]}M_{k}\left[F_{n}(t)\right]<\infty \hspace{1cm}\displaystyle\sup_{t\in [0,T]}M_{k}\left[\frac{\ud}{\ud t}F_{n}(t)\right]<\infty.
  \end{equation}
\end{prop}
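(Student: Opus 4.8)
The plan is to derive a closed system of ODEs for the relevant derivatives of the flow and then apply Gronwall's inequality, using the subquadratic assumption to close the estimates. First I would note that $F_n(t)$ is the Jacobian of the time-$t$ flow map $\kappa_n(t)$, so differentiating the Hamiltonian system \eqref{eq:Hamflow} with respect to the initial data $(\bd{q},\bd{p})$ gives the variational (linearized) equation
\begin{equation*}
  \frac{\ud}{\ud t} F_n(t) = J\, \mathrm{Hess}\, h_n(\bd{Q}_n,\bd{P}_n)\, F_n(t), \qquad F_n(0) = \Id_{2d},
\end{equation*}
where $J$ is the standard symplectic matrix and $\mathrm{Hess}\, h_n$ is the Hessian of $h_n$ evaluated along the trajectory. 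Since the remark after Definition~\ref{subquad} tells us $h_n$ is subquadratic (using that $\Gamma^{\ast}$ is bounded and $E_n$ is smooth on the separated band, plus the subquadraticity of $U$), the matrix $\mathrm{Hess}\, h_n$ and all its derivatives are uniformly bounded on $\Omega$. The case $k=0$ is then immediate: $\norm{F_n(t)} \le \exp\bigl(t\, M_0[\mathrm{Hess}\, h_n]\bigr)$ by Gronwall, and the bound on $\frac{\ud}{\ud t}F_n(t)$ follows directly from the variational equation.

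For general $k$ I would proceed by induction on $k$. Differentiating the variational equation $\alpha$ times with respect to $(\bd{q},\bd{p})$ (for a multi-index $\alpha$ with $|\alpha| = k$) and applying the multivariate Leibniz and Faà di Bruno rules, one obtains
\begin{equation*}
  \frac{\ud}{\ud t}\,\partial^{\alpha} F_n(t) = J\,\mathrm{Hess}\, h_n(\bd{Q}_n,\bd{P}_n)\,\partial^{\alpha} F_n(t) + R_{\alpha}(t),
\end{equation*}
where the remainder $R_{\alpha}(t)$ is a polynomial expression in: derivatives $\partial^{\beta}(\bd{Q}_n,\bd{P}_n)$ of order $|\beta| \le k$ (equivalently derivatives of $F_n$ of order $\le k-1$, together with the flow itself, whose first derivatives are $F_n$), the derivatives of $\mathrm{Hess}\, h_n$ up to order $k$ (uniformly bounded by subquadraticity), and $\partial^{\gamma} F_n$ for $|\gamma| \le k-1$. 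By the inductive hypothesis all these are bounded uniformly on $[0,T]\times\Omega$, so $R_\alpha \in L^\infty([0,T]\times\Omega)$; Gronwall's inequality applied to $\partial^\alpha F_n$ then yields $\sup_{t\in[0,T]} M_k[F_n(t)] < \infty$, and feeding this back into the ODE gives the bound on $\sup_{t\in[0,T]} M_k[\frac{\ud}{\ud t}F_n(t)]$ as well. One subtlety to address carefully: since $\Gamma^{\ast}$ is treated as a torus, the flow in the $\bd{p}$-variable and the periodicity of $E_n$ must be handled consistently; but because $E_n$ and $u_n$ extend smoothly and periodically, all derivatives of $h_n$ are genuinely bounded on $\Omega$, which is what the argument needs.

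The main obstacle is organizational rather than conceptual: keeping track of the combinatorial structure of $R_\alpha(t)$ to verify that it genuinely involves only lower-order derivatives of $F_n$ (so that the induction closes) and only finitely many derivatives of $\mathrm{Hess}\, h_n$ (so that subquadraticity suffices). The key points that make it work are (i) the variational equation is \emph{linear} in $F_n$ with coefficient $\mathrm{Hess}\, h_n$ composed with the flow, so differentiating it never raises the order of $F_n$ on the right-hand side beyond what already appears, and (ii) subquadraticity of $h_n$ gives uniform control of \emph{all} derivatives of order $\ge 2$, which is exactly the input needed for the Faà di Bruno terms. Once the $L^\infty$ bound on the forcing term $R_\alpha$ is in hand, the Gronwall step is routine and uniform in $(\bd{q},\bd{p})$.
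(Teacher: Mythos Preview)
Your proposal is correct and follows essentially the same approach as the paper: derive the variational equation $\frac{\ud}{\ud t}F_n = J\,\mathrm{Hess}\,h_n(\bd{Q}_n,\bd{P}_n)\,F_n$, use the subquadraticity of $h_n$ to bound the Hessian uniformly, apply Gronwall for $k=0$, then differentiate in $(\bd{q},\bd{p})$ via the Leibniz rule and induct on $k$. The paper's proof is slightly terser (it omits the Fa\`a di Bruno bookkeeping and the torus remark), but the structure and key inputs are identical.
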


\begin{proof}
  Differentiating $F_{n}(t,\bd{q},\bd{p})$ with respect to $t$ gives
  \begin{equation}\label{eq:Fderivatives}
    \dfrac{\ud}{\ud t}F_{n}(t,\bd{q},\bd{p})=\begin{pmatrix}
      \partial_{\bd{P}}\partial_{\bd{Q}}h_n & \partial_{\bd{P}}\partial_{\bd{P}}h_n\\
      -\partial_{\bd{Q}}\partial_{\bd{Q}}h_n & -\partial_{\bd{Q}}\partial_{\bd{P}}h_n
    \end{pmatrix}F_{n}(t,\bd{q},\bd{p}).
  \end{equation}
  By our assumption that $U$ is subquadratic on $\mathbb{R}^{d}$ and
  since $E_n \in \mathcal{C}^{\infty}(\Gamma^{\ast})$, there exists a
  constant $C$ independent of $(\bd{q},\bd{p})$ such that
\begin{equation}
\frac{\ud}{\ud t}|F_{n}(t,\bd{q},\bd{p})|=\left\lvert\begin{pmatrix}
\partial_{\bd{P}}\partial_{\bd{Q}}h_n & \partial_{\bd{P}}\partial_{\bd{P}}h_n\\
-\partial_{\bd{Q}}\partial_{\bd{Q}}h_n & -\partial_{\bd{Q}}\partial_{\bd{P}}h_n
\end{pmatrix}\right\rvert \bigl\lvert
F_{n}(t,\bd{q},\bd{p})\bigr\rvert \leq
C \bigl\lvert
F_{n}(t,\bd{q},\bd{p})\bigr\rvert
\end{equation}
with $\abs{F_{n}(0)}=\abs{\Id_{2d}}$. By an application of
Gronwall's inequality, we obtain
\begin{equation}
  |F_{n}(t)|\leq e^{C\abs{t}}.
\end{equation}
Differentiating \eqref{eq:Fderivatives} with respect to $(\bd{q},\bd{p})$ yields
\begin{multline}
\dfrac{\ud}{\ud t}\partial_{\bd{q}}^{\alpha_{\bd{q}}}\partial_{\bd{p}}^{\alpha_{\bd{p}}}F_{n}(t,\bd{q},\bd{p}) = \sum_{\beta_{\bd{q}}\leq\alpha_{\bd{q}},\beta_{\bd{p}}\leq\alpha_{\bd{p}}}
{\alpha_{\bd{q}} \choose \beta_{\bd{q}}}
{\alpha_{\bd{p}} \choose \beta_{\bd{p}}}
\partial_{\bd{q}}^{\beta_{\bd{q}}}\partial_{\bd{p}}^{\beta_{\bd{p}}}
\begin{pmatrix}
  \partial_{\bd{P}}\partial_{\bd{Q}} h_n & \partial_{\bd{P}}\partial_{\bd{P}} h_n \\
  -\partial_{\bd{Q}}\partial_{\bd{Q}} h_n &
  -\partial_{\bd{Q}}\partial_{\bd{P}} h_n
\end{pmatrix} \times \\
\times\partial_{\bd{q}}^{\alpha_{\bd{q}}-\beta_{\bd{q}}}\partial_{\bd{p}}^{\alpha_{\bd{p}}-\beta_{\bd{p}}}F_{n}(t,\bd{q},\bd{p}).
\end{multline}
Our estimate now follows by induction.
\end{proof}

Recall that the matrix $Z_{n}(t,\bd{q},\bd{p})$ is defined by
\begin{equation}
  Z_{n}(t,\bd{q},\bd{p}):= \partial_{\bd{z}}\left(\bd{Q}_{n}(t,\bd{q},\bd{p})+\I\bd{P}_{n}(t,\bd{q},\bd{p})\right)
  = (\partial_{\bd{q}}-\I\partial_{\bd{p}}) \left(\bd{Q}_{n}(t,\bd{q},\bd{p})+\I\bd{P}_{n}(t,\bd{q},\bd{p})\right).
\end{equation}
We have the following. It follows the same proof of \cite{LuYa:11}*{Proposition 3.5}, which we reproduce here for completeness.
\begin{prop}\label{Zinvbound}
  $Z_{n}(t,\bd{q},\bd{p})$ is invertible for
  $(\bd{q},\bd{p})\in\Omega$. Moreover, for each $k\in\mathbb{N}$,
  \begin{equation}
    M_{k}\Bigl[\bigl(Z_{n}(t)\bigr)^{-1}\Bigr]<\infty.
  \end{equation}
\end{prop}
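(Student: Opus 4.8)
The plan is to establish the lower bound on $|\det Z_n(t)|$ first, and then obtain bounds on the derivatives of $(Z_n)^{-1}$ by combining that lower bound with Proposition~\ref{kappabounds}. For the invertibility, I would work with the real and imaginary parts of $Z_n$. Writing $Z_n = (\partial_{\bd{q}}\bd{Q}_n + \partial_{\bd{p}}\bd{P}_n) + \I(\partial_{\bd{q}}\bd{P}_n - \partial_{\bd{p}}\bd{Q}_n)$ in terms of the blocks of $F_n$, the key classical fact is that if $\kappa_n(t)$ is a canonical transformation then the combination $Z_n$ satisfies a positivity property: for any $\bd{v}\in\CC^d$, $\operatorname{Re}\langle Z_n \bd{v}, \bd{v}\rangle$ (or an analogous Hermitian form built from $Z_n$ and its conjugate) is bounded below by $c|\bd{v}|^2$ for a constant $c>0$. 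The cleanest route is to show that $\bar Z_n^{T} Z_n + $ (a correction involving the symplectic relations) is positive definite; using the symplectic identities satisfied by the four blocks $\partial_{\bd{q}}\bd{Q}_n, \partial_{\bd{p}}\bd{Q}_n, \partial_{\bd{q}}\bd{P}_n, \partial_{\bd{p}}\bd{P}_n$ one deduces that $Z_n \bar Z_n^T$ equals an explicit symmetric positive-definite matrix (essentially $YY^T$ for the two $d\times 2d$ blocks $Y_1 = \partial_{\bd q}\bd Q_n + \I \partial_{\bd p}\bd Q_n$ etc.), which forces $\det Z_n \neq 0$ pointwise. This is exactly the argument of \cite{LuYa:11}*{Proposition 3.5}, so I would reproduce it: the symplecticity of $F_n$ (noted already in the excerpt) is what makes the Hermitian form nondegenerate.

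Next, to get a uniform-in-$(\bd{q},\bd{p},t)$ lower bound on $|\det Z_n(t)|$ rather than just pointwise nonvanishing, I would use Proposition~\ref{kappabounds}: the entries of $F_n(t)$ and hence of $Z_n(t)$ are bounded uniformly on $\Omega\times[0,T]$, and the positive-definite matrix $Z_n\bar Z_n^T$ has determinant equal to $|\det Z_n|^2$; an explicit lower bound on its smallest eigenvalue — obtained from the symplectic identities, which persist along the flow — gives $|\det Z_n(t,\bd{q},\bd{p})| \geq c_T > 0$ uniformly. Combined with the uniform upper bound on $\|Z_n\|$, Cramer's rule then gives $\sup_{t\in[0,T]} M_0[(Z_n(t))^{-1}] < \infty$.

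For the higher derivatives, I would argue by induction on $k$. Differentiating the identity $Z_n (Z_n)^{-1} = \Id_d$ gives $\partial^\alpha (Z_n)^{-1}$ as a sum of terms of the form $-(Z_n)^{-1}\bigl(\partial^\beta Z_n\bigr)\bigl(\partial^{\gamma}(Z_n)^{-1}\bigr)$ with $|\beta|\le |\alpha|$ and $|\gamma| < |\alpha|$ (Leibniz rule); by Proposition~\ref{kappabounds} the factors $\partial^\beta Z_n$ are bounded on $\Omega\times[0,T]$, and by the inductive hypothesis the lower-order factors $\partial^\gamma (Z_n)^{-1}$ are bounded, so $M_k[(Z_n(t))^{-1}]<\infty$ follows. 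The main obstacle here is really the first step — establishing the uniform lower bound on $|\det Z_n|$ — since everything after that is routine bookkeeping; the subtlety is to exploit the symplectic structure correctly so that the lower bound is genuinely uniform in phase space and does not degenerate (as it would for a generic complex matrix-valued flow) at any finite time $T$.
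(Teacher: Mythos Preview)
Your proposal is correct and follows essentially the same approach as the paper: both exploit the symplecticity of $F_n$ to show that $Z_n Z_n^{\ast}$ dominates a positive multiple of the identity (the paper obtains $\bd{v}^{\ast} Z_n Z_n^{\ast}\bd{v}\geq 2|\bd{v}|^2$ explicitly), yielding a uniform lower bound on $|\det Z_n|$ and hence invertibility. The only cosmetic difference is in the bookkeeping for $M_k[(Z_n)^{-1}]$: the paper invokes the cofactor/minor representation of the inverse together with $M_k[Z_n]<\infty$, while you argue by induction via the Leibniz expansion of $\partial^{\alpha}\bigl(Z_n (Z_n)^{-1}\bigr)=0$; either route works.
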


\begin{proof}
  $Z_{n}(t,\bd{q},\bd{p})$ inherits the property that
  $M_{k}(Z_{n}(t,\bd{q},\bd{p}))<\infty$ from the same
  estimate for $F_{n}(t,\bd{q},\bd{p})$. Moreover, we have
  \begin{equation}
    \begin{split}
      Z_{n}(Z_{n})^{*}(t,\bd{q},\bd{p})=&\begin{pmatrix}
       \I\Id_{d} & \Id_{d}
      \end{pmatrix}(F_{n})^{T}(t,\bd{q},\bd{p})\begin{pmatrix}
        \Id_{d} & -\I\Id_{d}\\
        \I\Id_{d} & \Id_{d}
      \end{pmatrix}F_{n}(t,\bd{q},\bd{p})\begin{pmatrix}
        -\I\Id_{d}\\
        \Id_{d}
      \end{pmatrix}\\
      =&\begin{pmatrix}
        \I\Id_{d} & \Id_{d}
      \end{pmatrix}
      \left((F_{n})^{T}(F_{n})\right)(t,\bd{q},\bd{p})
      \begin{pmatrix}
        -\I\Id_{d}\\
        \Id_{d}
      \end{pmatrix}\\
      &+\begin{pmatrix}
        \I\Id_{d} & \Id_{d}
      \end{pmatrix}(F_{n})^{T}(t,\bd{q},\bd{p})\begin{pmatrix}
        0 & -\I\Id_{d}\\
        \I\Id_{d} & 0
      \end{pmatrix}F_{n}(t,\bd{q},\bd{p})\begin{pmatrix}
        -\I\Id_{d}\\
        \Id_{d}
      \end{pmatrix}\\
      =&\begin{pmatrix}
        \I\Id_{d} & \Id_{d}
      \end{pmatrix}\left((F_{n})^{T}F_{n}\right)(t,\bd{q},\bd{p})\begin{pmatrix}
        -\I\Id_{d}\\
        \Id_{d}
      \end{pmatrix}+2\Id_{d}.
    \end{split}
  \end{equation}

  This calculation shows that, since
  $(F_{n}(t))^{T}F_{n}(t)$ is semi-positive
  definite, for any $\bd{v} \in \CC^{2d}$,
  \begin{equation}
    \bd{v}^{*}Z_{n}(t)(Z_{n}(t))^{*}\bd{v}\geq 2|\bd{v}|^{2}.
  \end{equation}
  Therefore $Z_{n}(t,\bd{q},\bd{p})$ is invertible and
  $\det \bigl(Z_{n}(t)\bigr)$ is uniformly bounded away from
  $0$ for all $\bd{q}$ and $\bd{p}$, so by representing
  $(Z_{n})^{-1}(t,\bd{q},\bd{p})$ by minors,
  $M_{k}\bigl((Z_{n})^{-1}(t,\bd{q},\bd{p})\bigr)<\infty$,
  as $M_{k}(Z_{n}(t,\bd{q},\bd{p}))$ is.
\end{proof}

\begin{prop}\label{ubounds}
  For each $k\in\mathbb{N}$,
  \begin{equation}
    \displaystyle\sup_{t\in[0,T]}M_{k}\left[u_{n}(\bd{P}_{n},\bd{x})\right]<\infty.
  \end{equation}
\end{prop}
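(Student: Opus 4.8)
The plan is to read $u_n(\bd{P}_n(t,\bd{q},\bd{p}),\bd{x})$ as the composition of the Bloch wave $\bd{\xi}\mapsto u_n(\bd{\xi},\bd{x})$ with the momentum component $(\bd{q},\bd{p})\mapsto\bd{P}_n(t,\bd{q},\bd{p})$ of the Hamiltonian flow, and to bound every $(\bd{q},\bd{p})$-derivative of this composition by the (multivariate) chain rule, using two inputs that are already at hand: uniform bounds on all $\bd{\xi}$-derivatives of $u_n$, and uniform bounds on all $(\bd{q},\bd{p})$-derivatives of $\bd{P}_n$. Note first that since $u_n$ is periodic in its first argument, the composition $u_n(\bd{P}_n,\bd{x})$ is a well-defined smooth function on $\Omega$ even though $\bd{P}_n$ need not stay in $\Gamma^{\ast}$.

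The first input I would record is the following: because the $n$-th band is separated from the rest of $\spec(H_{\bd\xi})$ for every $\bd\xi\in\Gamma^{\ast}$, analytic perturbation theory together with elliptic regularity on $\Gamma$ lets us choose $u_n(\bd\xi,\bd x)$ smooth and periodic in $\bd\xi$ and in $\bd x$; hence, for every multi-index $\beta$,
\[
  C_\beta:=\sup_{\bd\xi\in\Gamma^{\ast}}\ \sup_{\bd x\in\RR^d}\ \bigl|\partial_{\bd\xi}^{\beta}u_n(\bd\xi,\bd x)\bigr|<\infty .
\]
In particular $u_n(\bd P_n,\bd x)$ is bounded by $C_0$ no matter how large $\bd P_n$ grows. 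The second input is Proposition~\ref{kappabounds}: since the derivatives of $\bd P_n$ of order $\ge 1$ are exactly the entries of $F_n(t)$ and their $(\bd q,\bd p)$-derivatives, we have $\sup_{t\in[0,T]}\bigl|\partial_{\bd q}^{\gamma_q}\partial_{\bd p}^{\gamma_p}\bd P_n(t,\bd q,\bd p)\bigr|<\infty$ for all $(\bd q,\bd p)\in\Omega$ and all $\gamma_q,\gamma_p$ with $1\le|\gamma_q|+|\gamma_p|\le k$.

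Now apply the Fa\`a di Bruno formula: for multi-indices with $|\alpha_q|+|\alpha_p|\le k$, the derivative $\partial_{\bd q}^{\alpha_q}\partial_{\bd p}^{\alpha_p}\bigl[u_n(\bd P_n(t,\bd q,\bd p),\bd x)\bigr]$ is a finite sum, with combinatorial coefficients depending only on $(\alpha_q,\alpha_p)$, of terms of the form
\[
  \bigl(\partial_{\bd\xi}^{\beta}u_n\bigr)(\bd P_n,\bd x)\ \prod_{j}\partial_{\bd q}^{\gamma_q^{(j)}}\partial_{\bd p}^{\gamma_p^{(j)}}(\bd P_n)_{m_j},
\]
where $|\beta|\le|\alpha_q|+|\alpha_p|\le k$, each $|\gamma_q^{(j)}|+|\gamma_p^{(j)}|\ge1$, and $\sum_j(|\gamma_q^{(j)}|+|\gamma_p^{(j)}|)\le k$. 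By the two inputs above each such term is bounded by $C_\beta$ times a product of finitely many uniformly bounded flow-derivatives, so the whole sum is bounded uniformly in $(\bd q,\bd p)\in\Omega$, $\bd x\in\RR^d$, and $t\in[0,T]$. Taking the maximum over $|\alpha_q|+|\alpha_p|\le k$ yields $\sup_{t\in[0,T]}M_k[u_n(\bd P_n,\bd x)]<\infty$. Alternatively one may argue by induction on $k$ exactly as in the proof of Proposition~\ref{kappabounds}: one differentiation produces $\partial_{\bd\xi}u_n(\bd P_n,\bd x)$ contracted with a first derivative of $\bd P_n$, and both factors are controlled by the inductive hypothesis applied to finitely many lower-order quantities.

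The only step that is not routine bookkeeping is the first one --- that $u_n$ and all its $\bd\xi$-derivatives are bounded --- which is precisely where the band-separation hypothesis of Theorem~\ref{theoremA} is used: without it the gauge cannot be chosen smoothly in $\bd\xi$ and $\partial_{\bd\xi}u_n$ need not even exist. Everything after that is the chain rule against the flow estimates of Proposition~\ref{kappabounds}.
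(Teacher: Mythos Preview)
Your proof is correct and follows essentially the same approach as the paper's: both invoke the band-separation hypothesis to get smoothness (hence uniform boundedness by periodicity) of $u_n(\bd{\xi},\bd{x})$ and all its $\bd{\xi}$-derivatives, and then appeal to Proposition~\ref{kappabounds} for the flow derivatives; you simply spell out the chain rule (Fa\`a di Bruno) explicitly where the paper leaves it implicit.
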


\begin{proof}
  $u_{n}(\bd{P}_{n},\bd{x})$ is smooth on the compact set
  $\Gamma^{\ast} \times \Gamma$ since the $n$-th band is separated
  from the rest of the spectrum (see e.g., \cite{ReedSimon4}*{Sec XIII.16}). Thus
  $u_{n}(\bd{P}_{n},\bd{x})$ is uniformly bounded on
  $\Gamma^{\ast} \times \Gamma$ and hence $\Gamma^{\ast} \times \RR^d$
  due to periodicity. We also see from Proposition~\ref{kappabounds}
  that the derivatives of $u_{n}(\bd{P}_{n},\bd{x})$ are also
  bounded. Thus, $M_{k}[u_{n}(\bd{P}_{n},\bd{x})]<\infty$ for any
  finite time $t$.
\end{proof}

\subsection{Higher order asymptotic solution}

To prove the theorem, we will need to construct a solution to the
Schr\"odinger equation that is accurate up to $\Or(\veps^2)$.
The construction is based on matched asymptotic expansion.
Let us fix a band $n \in \mathbb{N}$ and consider the ansatz
\begin{equation}\label{eq:ansatz}
  \psi_{\FGA, \infty}^{\veps} = \dfrac{1}{(2\pi \veps)^{3d/2}}
  \iint_{\Omega}b^{\veps}(t,\bd{X},\bd{q},\bd{p})
  G^{\veps}_{\bd{Q}_{n},\bd{P}_{n}}
  e^{\I S_{n}/\veps}\braket{G_{\bd{q},\bd{p}}^{\veps}u_{n}(\bd{p},
    \cdot/\veps),\psi_{0}} \ud\bd{q}\ud\bd{p},
\end{equation}
where the coefficient $b$ assumes the asymptotic expansion
\begin{equation}
  \begin{aligned}
    b^{\veps}(t,\bd{X},\bd{q},\bd{p})
    & :=  \sum_{j=0}^{\infty} \veps^{j}b_{j}(t,\bd{X},\bd{q},\bd{p}) \\
    & =  a_{n,0}(t,\bd{q},\bd{p}) u_{n}(\bd{P}_{n},\bd{X}) \\
    & \qquad + \veps \bigl( a_{n,1}(t,\bd{q},\bd{p}) u_{n}(\bd{P}_{n},\bd{X})+b_{n,1}^{\perp}(t, \bd{X}, \bd{q}, \bd{p}) \bigr) \\
    & \qquad + \veps^2 \bigl( a_{n,2}(t,\bd{q},\bd{p}) u_{n}(\bd{P}_{n},\bd{X})+b_{n,2}^{\perp}(t, \bd{X}, \bd{q}, \bd{p}) \bigr) +
    \sum_{j=3}^{\infty} \veps^{j}b_{j}(t,\bd{X},\bd{q},\bd{p})
  \end{aligned}
\end{equation}


To determine the terms in the expansion, we will make use of the
following Lemma.
\begin{defn}
For $f=f(t,\bd{x},\bd{y},\bd{q},\bd{p})$ and $g=g(t,\bd{x},\bd{y},\bd{q},\bd{p})$ such that for any $t$ and $\bd{x}$,
\begin{equation*}
  f(t,\bd{x}, \cdot,\cdot,\cdot), g(t,\bd{x},\cdot,\cdot,\cdot)\in L^{\infty}(\mathbb{R}^{d};\mathcal{S}(\mathbb{R}^{d}\times\Gamma^{\ast})),
\end{equation*}
we say that $f$ and $g$ are equivalent for the $n$-th Bloch band, denoted as $f\sim_{n}g$ if for any $t\geq 0$ and $\Psi_{0}\in L^{2}(\mathbb{R}^{d})$
\begin{equation}
\iint_{\Omega}\int_{\RR^d} (f-g)(t,\bd{x},\bd{y},\bd{q},\bd{p})G^{\veps}_{\bd{Q}_{n},\bd{P}_{n}}e^{\I S_{n}(t,\bd{q},\bd{p})/\veps}
\wb{G}^{\veps}_{\bd{q},\bd{p}}(\bd{y})\Psi_{0}(\bd{y})
\ud\bd{y}\ud\bd{q}\ud\bd{p}=0.
\end{equation}
\end{defn}

\begin{lemma}\label{xtoepsilon}
For any $d$-vector function $\bd{v}(\bd{y},\bd{q},\bd{p})$ such that each component is in $L^{\infty}(\mathbb{R}^{d};\mathcal{S}(\mathbb{R}^{d}\times\Gamma^{\ast}))$
\begin{equation}
\bd{v}(\bd{y},\bd{q},\bd{p})\cdot(\bd{x}-\bd{Q}_{n})\sim_{n}-\veps\partial_{\bd{z}}\cdot(\bd{v}Z_{n}^{-1}),
\end{equation}
and for any $d\times d$ matrix function $M(\bd{y},\bd{q},\bd{p})$ such that each component is in $L^{\infty}(\mathbb{R}^{d};\mathcal{S}(\mathbb{R}^{d}\times\Gamma^{\ast}))$
\begin{equation}
\begin{split}
\tr\left(M(\bd{y},\bd{q},\bd{p})(\bd{x}-\bd{Q}_{n})^{2}\right)\sim_{n}&\veps\,\tr\left(\partial_{\bd{z}}\bd{Q}_{n}MZ_{n}^{-1}\right)-\veps\tr\left(\partial_{\bd{z}}M(\bd{x}-\bd{Q}_{n})Z_{n}^{-1}+ M(\bd{x}-\bd{Q}_{n})\partial_{\bd{z}}Z_{n}^{-1}\right)\\
=&\veps\tr\left(\partial_{\bd{z}}\bd{Q}_{n}MZ_{n}^{-1}\right)+\veps^{2}\tr\left(\partial_{\bd{z}}\left(\partial_{\bd{z}}M(Z_{n}^{-1})^{2}\right)+\partial_{\bd{z}}\left( M\partial_{\bd{z}}Z_{n}^{-1}\right)Z_{n}^{-1}\right).
\end{split}
\end{equation}
Higher order terms can be obtained recursively. In general we have for any multi-index $\alpha$ that $|\alpha|\geq 3$,
\begin{equation}
(\bd{x}-\bd{Q}_{n})^\alpha\sim_{n}\mathcal{O}\left(\veps^{\left\lfloor{\frac{|\alpha|+1}{2}}\right\rfloor}\right).
\end{equation}
\end{lemma}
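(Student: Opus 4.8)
The plan is to recognize the oscillatory exponential $e^{\I\Phi/\veps}$ hidden in the definition of $\sim_n$ and integrate by parts in the phase–space variables $(\bd{q},\bd{p})$. Observe first that $G^\veps_{\bd{Q}_n,\bd{P}_n}(\bd{x})\,e^{\I S_n(t,\bd{q},\bd{p})/\veps}\,\wb{G}^\veps_{\bd{q},\bd{p}}(\bd{y}) = e^{\I\Phi(t,\bd{x},\bd{y},\bd{q},\bd{p})/\veps}$, where $\Phi$ is exactly the phase function of the Fourier integral operator $\mathcal{I}^\veps$. The first — and central — step is then to compute $\partial_{\bd{z}}\Phi$ with $\partial_{\bd{z}}=\partial_{\bd{q}}-\I\partial_{\bd{p}}$. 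Differentiating term by term and using the defining relations $\nabla_{\bd{q}}S_n=-\bd{p}+\nabla_{\bd{q}}\bd{Q}_n\cdot\bd{P}_n$ and $\nabla_{\bd{p}}S_n=\nabla_{\bd{p}}\bd{Q}_n\cdot\bd{P}_n$, a number of terms cancel — the $\bd{p}$–linear ones, the $\nabla\bd{Q}_n{\cdot}\bd{P}_n$ ones, and (on forming $\partial_{\bd{q}}-\I\partial_{\bd{p}}$) the $(\bd{y}-\bd{q})$ ones — leaving the clean identity
\[
  \partial_{\bd{z}}\Phi = -\I\,Z_n\,(\bd{x}-\bd{Q}_n),
  \qquad\text{so}\qquad
  (\bd{x}-\bd{Q}_n)\,e^{\I\Phi/\veps} = \veps\,Z_n^{-1}\,\partial_{\bd{z}}\,e^{\I\Phi/\veps},
\]
where $Z_n^{-1}$ exists by Proposition~\ref{Zinvbound}. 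Everything else is a systematic consequence of this identity.

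For the first equivalence, I multiply the identity above by $\bd{v}(\bd{y},\bd{q},\bd{p})$, substitute it into the integral defining $\sim_n$, and integrate by parts in $(\bd{q},\bd{p})$. The boundary terms vanish: $\Gamma^\ast$ is a torus, and one checks — using the periodicity of $E_n$, hence the equivariance $\bd{Q}_n(t,\bd{q},\bd{p}+\bd{b})=\bd{Q}_n(t,\bd{q},\bd{p})$, $\bd{P}_n(t,\bd{q},\bd{p}+\bd{b})=\bd{P}_n(t,\bd{q},\bd{p})+\bd{b}$ of the Hamiltonian flow under a reciprocal lattice vector $\bd{b}$, together with the corresponding shift of $S_n$ — that all the $\bd{p}$–dependent factors combine into an integrand that is genuinely periodic in $\bd{p}$; in $\bd{q}$ the factor $\bd{v}Z_n^{-1}$ is Schwartz by hypothesis and Propositions~\ref{kappabounds}--\ref{Zinvbound}. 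Since $\Psi_0(\bd{y})$ does not depend on $(\bd{q},\bd{p})$, the integration by parts transfers $\partial_{\bd{z}}$ onto $\bd{v}Z_n^{-1}$ with a sign, which is precisely the claimed relation $\bd{v}\cdot(\bd{x}-\bd{Q}_n)\sim_n-\veps\,\partial_{\bd{z}}\cdot(\bd{v}Z_n^{-1})$.

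The quadratic identity is obtained by the same device applied once. Writing $\tr\bigl(M(\bd{x}-\bd{Q}_n)^2\bigr)=\sum_{k,\ell}M_{\ell k}(x_k-Q_{n,k})(x_\ell-Q_{n,\ell})$ (we may take $M$ symmetric, since only the symmetric part contributes), I extract one factor $(x_\ell-Q_{n,\ell})$ via the identity above and integrate by parts; the derivative $\partial_{\bd{z}}$ then lands on $M$, on $Z_n^{-1}$, or on $\bd{Q}_n$ inside the remaining $(\bd{x}-\bd{Q}_n)$. The last case loses \emph{both} powers of $(\bd{x}-\bd{Q}_n)$ and, after a cyclic rearrangement of the contraction, yields the $\Or(\veps)$ term $\veps\,\tr(\partial_{\bd{z}}\bd{Q}_n\,M\,Z_n^{-1})$; the first two cases retain a single power of $(\bd{x}-\bd{Q}_n)$ and so are $\Or(\veps^2)$ by the first equivalence, which after one further application produces the displayed $\veps^2$ expression. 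For a general multi-index $\alpha$ with $|\alpha|=m\geq 3$ one iterates: a single step rewrites $(\bd{x}-\bd{Q}_n)^{\alpha}$ as $\veps$ times a finite sum of terms carrying either $m-1$ factors of $(\bd{x}-\bd{Q}_n)$ (when $\partial_{\bd{z}}$ hits a coefficient) or $m-2$ factors (when it hits one of the surviving $\bd{Q}_n$'s). Hence if $E(m)$ denotes the resulting $\veps$–exponent, $E(m)\geq 1+\min\{E(m-1),E(m-2)\}$, and with $E(0)=0$, $E(1)=1$ this recursion gives $E(m)\geq\lfloor(m+1)/2\rfloor$, which is the claim. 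The one genuinely delicate point is the first step — verifying the cancellations that produce $\partial_{\bd{z}}\Phi=-\I Z_n(\bd{x}-\bd{Q}_n)$, and confirming that no boundary terms arise when integrating by parts over $\RR^d\times\Gamma^\ast$; once those are secured, the rest is routine bookkeeping.
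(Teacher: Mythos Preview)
Your proposal is correct and is precisely the standard argument: the paper itself omits the proof and cites \cite{SwRo:09}*{Lemma 3}, and what you have written is exactly that integration-by-parts mechanism based on the key identity $\partial_{\bd{z}}\Phi=-\I Z_n(\bd{x}-\bd{Q}_n)$. Your handling of the cancellations in $\partial_{\bd{z}}\Phi$, the boundary-term discussion on $\RR^d\times\Gamma^{\ast}$, and the recursion $E(m)\geq 1+\min\{E(m-1),E(m-2)\}$ are all in line with the referenced proof, so there is nothing to add.
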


\begin{proof}
  The proof of lemma \ref{xtoepsilon} is essentially the same as in Lemma 3 of
  \cite{SwRo:09}
and thus is omitted here.
\end{proof}

We now substitute \eqref{eq:ansatz} into the Schr\"odinger equation. For this we first compute the time and space derivatives on
$\psi_{\FGA, \infty}^{\veps}$:
\begin{align}
  \I\veps\partial_{t} \psi_{\FGA, \infty}^{\veps}=&\dfrac{1}{(2\pi
    \veps)^{3d/2}}\iint_{\Omega}\left\{i\veps\partial_{t}b^{\veps}
    -\left(\partial_{t}S_{n}-\bd{P}_{n}\cdot\partial_{t}\bd{Q}_{n}+(\partial_{t}\bd{P}_{n}-\I\partial_{t}\bd{Q}_{n})\cdot(\bd{x}-\bd{Q}_{n})\right)b^{\veps}\right\}\times \\
  & \hspace{10em} \notag \times G^{\veps}_{\bd{Q}_{n},\bd{P}_{n}}e^{\I S_{n}/\veps}
  \braket{G_{\bd{q},\bd{p}}^{\veps}u_{n}(\bd{p}, \cdot/\veps),\psi_{0}}\ud\bd{q}\ud\bd{p}. \\
  \tfrac{1}{2}\veps^{2}\Delta \psi_{\FGA, \infty}^{\veps}=&\dfrac{1}{(2\pi \veps)^{3d/2}}\iint_{\Omega}\left[-\dfrac{1}{2}(-\I\nabla_{\bd{X}}+\bd{P}_{n})^{2}b^{\veps}-(\nabla_{\bd{X}}b^{\veps}+\I b^{\veps}\bd{P}_{n})\cdot(\bd{x}-\bd{Q}_{n})\right. +\\
  &\hspace{10em} \qquad \notag\left.+ \dfrac{1}{2}b^{\veps}|\bd{x}-\bd{Q}_{n}|^{2}-\dfrac{1}{2}\veps
    b^{\veps} d\right]\times \\
  &\hspace{10em} \notag \times G^{\veps}_{\bd{Q}_{n},\bd{P}_{n}}e^{\I S_{n}/\veps}
  \braket{G_{\bd{q},\bd{p}}^{\veps}u_{n}(\bd{p},
    \cdot/\veps),\psi_{0}}\ud\bd{q}\ud\bd{p}.
\end{align}
Hence, after rearranging terms, we arrive at
\begin{equation}
\begin{split}
  \bigl(\I\veps\partial_{t} & +\dfrac{1}{2}\veps^{2}\Delta  -V(\bd{X})-U(\bd{x})\bigr)\psi_{\FGA, \infty}^{\veps} = \\
  & =\dfrac{1}{(2\pi \veps)^{3d/2}}\iint_{\Omega}
  \biggl\{\biggl[-\dfrac{1}{2}(-\I\nabla_{\bd{X}}+\bd{P}_{n})^{2}-V(\bd{X})
  - U(\bd{x})-\partial_{t}S_{n}\biggr]
  b^{\veps} + \\
  & \hspace{8em}
  +\veps\bigl(\I\partial_{t}b^{\veps}-\dfrac{1}{2}b^{\veps}d\bigr)
  -\left[(\nabla_{\bd{X}}b^{\veps}
    +\I b^{\veps}\bd{P}_{n})+(\partial_{t}\bd{P}_{n}
    -\I\partial_{t}\bd{Q}_{n})b^{\veps}\right]\cdot(\bd{x}-\bd{Q}_{n}) + \\
  & \hspace{8em} +\dfrac{1}{2}|\bd{x}-\bd{Q}_{n}|^{2}b^{\veps}
  +\bd{P}_{n}\cdot\partial_{t}\bd{Q}_{n}b^{\veps}\biggr\}G^{\veps}_{\bd{Q}_{n},\bd{P}_{n}}e^{\I S_{n}/\veps}
  \braket{G_{\bd{q},\bd{p}}^{\veps}u_{n}(\bd{p},
    \cdot/\veps),\psi_{0}}\ud\bd{q}\ud\bd{p}.
\end{split}
\end{equation}
Define
\begin{equation}
  \begin{aligned}
    f(t, \bd{x}, \bd{y}, \bd{q}, \bd{p}) & =
    \biggl\{\biggl[-\dfrac{1}{2}(-\I\nabla_{\bd{X}}+\bd{P}_{n})^{2}-V(\bd{X})
    - U(\bd{x})-\partial_{t}S_{n}\biggr]
    b^{\veps} + \\
    & \qquad
    +\veps \bigl(\I\partial_{t}b^{\veps}-\dfrac{1}{2}b^{\veps}d\bigr)
    -\left[(\nabla_{\bd{X}}b^{\veps}
      +\I b^{\veps}\bd{P}_{n})+(\partial_{t}\bd{P}_{n}
      -\I\partial_{t}\bd{Q}_{n})b^{\veps}\right]\cdot(\bd{x}-\bd{Q}_{n}) + \\
    & \qquad +\dfrac{1}{2}|\bd{x}-\bd{Q}_{n}|^{2}b^{\veps}
    +\bd{P}_{n}\cdot\partial_{t}\bd{Q}_{n}b^{\veps}\biggr\}
    \wb{u}_{n}(\bd{p},\bd{Y}),
  \end{aligned}
\end{equation}
then we can write
\begin{multline}
  \bigl(\I\veps\partial_{t} +\dfrac{1}{2}\veps^{2}\Delta  -V(\bd{X})-U(\bd{x})\bigr)\psi_{\FGA, \infty}^{\veps} = \\
  =
  \dfrac{1}{(2\pi \veps)^{3d/2}} \iint_{\Omega} \int_{\RR^d} f(t, \bd{x}, \bd{y}, \bd{p}, \bd{q}) G^{\veps}_{\bd{Q}_{n},\bd{P}_{n}}e^{\I S_{n}/\veps}\wb{G}_{\bd{q},\bd{p}}^{\veps}(\bd{y})\psi_{0}(\bd{y})
  \ud\bd{y}\ud\bd{q}\ud\bd{p}.
\end{multline}
Applying Lemma~\ref{xtoepsilon} and adding and subtracting
$U(\bd{Q}_{n})$, we get
\begin{equation}\label{eq:eqf}
\begin{split}
  f \sim_n
  & \Bigl(-\tfrac{1}{2}(-\I\nabla_{\bd{X}}+\bd{P}_{n})^{2}-V(\bd{X})-(U(\bd{x})-U(\bd{Q}_{n}))-\partial_{t}S_{n} \Bigr) b^{\veps}\wb{u}_{n}(\bd{p},\bd{Y})\\
  & \qquad +\veps
  \bigl(\I\partial_{t}b^{\veps}-\dfrac{1}{2}b^{\veps}d\bigr)
  \wb{u}_{n}(\bd{p},\bd{Y})\\
  & \qquad + \veps\partial_{\bd{z}} \Bigl(\left[(\nabla_{\bd{X}}
    b^{\veps} +\I
    b^{\veps}\bd{P}_{n})+(\partial_{t}\bd{P}_{n}-\I\partial_{t}\bd{Q}_{n})b^{\veps}\right]
  \wb{u}_{n}(\bd{p},\bd{Y})Z_{n}^{-1}\Bigr)\\
  & \qquad +\veps\dfrac{1}{2} b^{\veps} \tr\left[\partial_{\bd{z}}\bd{Q}_{n}Z_{n}^{-1}\right]\wb{u}_{n}(\bd{p},\bd{Y})+\veps^{2}\dfrac{1}{2}\tr\left[\partial_{\bd{z}}\left(\partial_{\bd{z}}\left(b^{\veps}\wb{u}_{n}(\bd{p},\bd{Y})Z_{n}^{-1}\right)Z_{n}^{-1}\right)\right]\\
  & \qquad
  +\bd{P}_{n}\cdot\partial_{t}\bd{Q}_{n}b^{\veps}\wb{u}_{n}(\bd{p},\bd{Y})-U(\bd{Q}_{n})b^{\veps}
  \wb{u}_{n}(\bd{p},\bd{Y})
\end{split}
\end{equation}
Use the Taylor expansion of $U(\bd{x})$ about $\bd{Q}_{n}$
\begin{multline}\label{eq:eq2}
  (U(\bd{x})-U(\bd{Q}_{n})) = \nabla U(\bd{Q}_{n})(\bd{x}-\bd{Q}_{n}) +
  \dfrac{1}{2!} \nabla^2 U(\bd{Q}_{n})(\bd{x}-\bd{Q}_{n})^{2}\\
  + \dfrac{1}{3!} \nabla^{3}U(\bd{Q}_{n})
    (\bd{x}-\bd{Q}_{n})^{3} +
  \dfrac{1}{4!} \nabla^{4}U(\bd{Q}_{n}) (\bd{x}-\bd{Q}_{n})^{4}
  +\sum_{|\alpha|=5}R_{\alpha}(\bd{x})(\bd{x}-\bd{Q}_{n})^{\alpha}
\end{multline}
with
\begin{equation}
R_{\alpha}(\bd{x})=\dfrac{|\alpha|}{5!}\displaystyle\int_{0}^{1}(1-\tau)^{|\alpha|-1}\partial_{\bd{Q}_{n}}^{\alpha}U(\bd{Q}_{n}+\tau(\bd{x}-\bd{Q}_{n}))d\tau.
\end{equation}
From now on, let us denote the remainder term in \eqref{eq:eq2} by $R(\bd{x},\bd{q},\bd{p})$.

Applying Lemma~\ref{xtoepsilon} again to \eqref{eq:eqf} together with \eqref{eq:eq2}, we obtain
\begin{equation}
\begin{split}
  f & \sim_n
  \Bigl(-\dfrac{1}{2}(-\I\nabla_{\bd{X}}+\bd{P}_{n})^{2}-V(\bd{X})-\partial_{t}S_{n}\Bigr)
  b^{\veps}\wb{u}_{n}(\bd{p},\bd{Y})\\
  &\qquad +\bd{P}_{n}\cdot\partial_{t}\bd{Q}_{n}b^{\veps}\wb{u}_{n}(\bd{p},\bd{Y})-U(\bd{Q}_{n})b^{\veps}\wb{u}_{n}(\bd{p},\bd{Y})\\
  &\qquad +\veps\Bigl(\I\partial_{t}b^{\veps}-\dfrac{1}{2}b^{\veps}d\Bigr)\wb{u}_{n}(\bd{p},\bd{Y})+\veps\partial_{\bd{z}}\left(\nabla U(\bd{Q}_{n})b^{\veps}\wb{u}_{n}(\bd{p},\bd{Y})Z_{n}^{-1}\right)\\
  &\qquad +\veps\partial_{\bd{z}}\left(\left[(\nabla_{\bd{X}}b^{\veps}+\I b^{\veps}\bd{P}_{n})+(\partial_{t}\bd{P}_{n}-\I\partial_{t}\bd{Q}_{n})b^{\veps}\right]\wb{u}_{n}(\bd{p},\bd{Y})Z_{n}^{-1}\right)\\
  &\qquad +\veps\dfrac{1}{2!}\tr\left[\partial_{\bd{z}}\bd{Q}_{n}(I-\nabla^2 U(\bd{Q}_{n}))b^{\veps}\wb{u}_{n}(\bd{p},\bd{Y})Z_{n}^{-1}\right]\\
  &\qquad +\veps^{2}\dfrac{1}{2!}\tr\left[\partial_{\bd{z}}(\partial_{\bd{z}}((I-\nabla^2 U(\bd{Q}_{n}))b^{\veps}\wb{u}_{n}(\bd{p},\bd{Y})Z_{n}^{-1})Z_{n}^{-1})\right]\\
  &\qquad +\veps^{2}\dfrac{2}{3!}\tr\left[\partial_{\bd{z}}(\partial_{\bd{z}}\bd{Q}_{n}\nabla^{3}U(\bd{Q}_{n})b^{\veps}\wb{u}_{n}(\bd{p},\bd{Y})(Z_{n}^{-1})^{2})\right]\\
  & \qquad   +\veps^{2}\dfrac{1}{3!}\tr\left[\partial_{\bd{z}}\bd{Q}_{n}\partial_{\bd{z}}(\nabla^{3}U(\bd{Q}_{n})b^{\veps}\wb{u}_{n}(\bd{p},\bd{Y})Z_{n}^{-1})Z_{n}^{-1}\right]\\
  &\qquad
  -\veps^{2}\dfrac{3}{4!}\tr\left[(\partial_{\bd{z}}\bd{Q}_{n})^{2}\nabla^{4}U(\bd{Q}_{n})b^{\veps}\wb{u}_{n}(\bd{p},\bd{Y})(Z_{n}^{-1})^{2}\right]+R(\bd{x},\bd{q},\bd{p})b^{\veps}\wb{u}_{n,\bd{p}}(\bd{Y}).
\end{split}
\end{equation}
Let us define three operators $L_{0}^{n}$, $L_{1}^{n}$, and $L_{2}^{n}$ acting on $\Phi = \Phi(t, \bd{x}, \bd{y}, \bd{q}, \bd{p})$ by
\begin{align}    L_{0}^{n}(\Phi):=&\Bigl(-\dfrac{1}{2}(-\I\nabla_{\bd{X}}+\bd{P}_{n})^{2}-V(\bd{X})-\partial_{t}S_{n}\Bigr)\Phi\\
  &\notag \qquad +\bd{P}_{n}\cdot\partial_{t}\bd{Q}_{n}\Phi-U(\bd{Q}_{n})\Phi\\
  \notag =&\left(-H_{\bd{P}_{n}}+E_{n}(\bd{P}_{n})\right)\Phi,\\
  L_{1}^{n}(\Phi):=&\Bigl(\I\partial_{t}\Phi-\dfrac{1}{2}\Phi d\Bigr)+\partial_{\bd{z}}\left(\nabla U(\bd{Q}_{n})\Phi Z_{n}^{-1}\right)\\
  &\qquad \notag +\partial_{\bd{z}}\left(\left[(\nabla_{\bd{X}}\Phi+\I\Phi\bd{P}_{n})+(\partial_{t}\bd{P}_{n}-\I\partial_{t}\bd{Q}_{n})\Phi\right]Z_{n}^{-1}\right)\\
  &\qquad \notag +\dfrac{1}{2!}\tr\left[\partial_{\bd{z}}\bd{Q}_{n}(I-\nabla^2 U(\bd{Q}_{n}))\Phi Z_{n}^{-1}\right],\\
  \intertext{and}\label{L2phi}
  L_{2}^{n}(\Phi):=&\dfrac{1}{2!}\tr\left[\partial_{\bd{z}}(\partial_{\bd{z}}((I-\nabla^2 U(\bd{Q}_{n}))\Phi Z_{n}^{-1})Z_{n}^{-1})\right]\\
  &\qquad \notag +\dfrac{2}{3!}\tr\left[\partial_{\bd{z}}(\partial_{\bd{z}}\bd{Q}_{n}\nabla^{3}U(\bd{Q}_{n})\Phi(Z_{n}^{-1})^{2})\right]\\
  &\qquad \notag +\dfrac{1}{3!}\tr\left[\partial_{\bd{z}}\bd{Q}_{n}\partial_{\bd{z}}(\nabla^{3}U(\bd{Q}_{n})\Phi Z_{n}^{-1})Z_{n}^{-1}\right]\\
  &\qquad \notag -\dfrac{3}{4!}\tr\left[(\partial_{\bd{z}}\bd{Q}_{n})^{2}\nabla^{4}U(\bd{Q}_{n})\Phi(Z_{n}^{-1})^{2}\right].
\end{align}
We thus arrive at
\begin{multline}\label{eq:fgaexpansion}
  \bigl(\I\veps\partial_{t}+\tfrac{1}{2}\veps^{2}\Delta-V(\bd{X})
  -U(\bd{x})\bigr)\psi_{\FGA, \infty}^{\veps}\\
  =\dfrac{1}{(2\pi \veps)^{3d/2}} \iint_{\Omega}\int_{\mathbb{R}^{d}}
  \left\{ L_{0}^{n}(b^{\veps}\wb{u}_{n}(\bd{p},\bd{Y}))+\veps L_{1}^{n}(b^{\veps}\wb{u}_{n}(\bd{p},\bd{Y}))\right. \\
  \left.+\veps^{2}L_{2}^{n}(b^{\veps}\wb{u}_{n}(\bd{p},\bd{Y}))
    +R(\bd{x},\bd{q},\bd{p})b^{\veps}\wb{u}_{n}(\bd{p},\bd{Y})\right\}
  G^{\veps}_{\bd{Q}_{n},\bd{P}_{n}}e^{\I S_{n}/\veps}\wb{G}_{\bd{q},\bd{p}}^{\veps}(\bd{y})\psi_{0}(\bd{y})
  \ud\bd{y}\ud\bd{q}\ud\bd{p}.
\end{multline}

Note that by the choice $b_{n,0} = a_{n, 0} u_{n}(\bd{P}_{n}, \bd{X})$, the $\Or(1)$ term in the integrand on the right hand side of \eqref{eq:fgaexpansion} vanishes as
\begin{equation}\label{eq:vanishingO(1)}
  L_0^n ( a_{n, 0}(t, \bd{q}, \bd{p}) u_{n}(\bd{p},\bd{X}) )
  = a_{n, 0}(t, \bd{q}, \bd{p}) \bigl(- H_{\bd{P}_n} + E_n(\bd{P}_{n}) \bigr) u_{n}(\bd{P}_{n}, \bd{X}) = 0
\end{equation}
for any $a_{n, 0}$.

\subsubsection{Leading order term $b_{n, 0}$}
To determine $a_{n,0}$, we set the order $\Or(\veps)$ term on the right
hand side of \eqref{eq:fgaexpansion} to zero and get
\begin{equation}\label{eq:vanishingO(2)}
L_{0}^{n}(b_{n,1}\wb{u}_{n}(\bd{p},\bd{Y}))=-L_{1}^{n}(b_{n,0}\wb{u}_{n}(\bd{p},\bd{Y})).
\end{equation}
We multiply the equation by $\wb{u}_{n}(\bd{P}_{n},\bd{X})$
and integrate over $\Gamma$; this gives
\begin{equation}\label{eq:a0}
  \partial_{t}a_{n,0} = \dfrac{1}{2}a_{n,0}
  \tr\left(\partial_{\bd{z}}\bd{P}_{n}(\nabla_{\bd{P}_{n}}^{2}E_{n})Z_{n}^{-1}\right)
  -\I a_{n,0}\mathcal{A}(\bd{P}_{n})\cdot\nabla_{\bd{Q}_{n}}U
  -\dfrac{\I}{2}a_{n,0}\tr(\partial_{\bd{z}}
  \bd{Q}_{n}(\nabla_{\bd{Q}_{n}}^{2}U)Z_{n}^{-1}).
\end{equation}
Indeed, by integration, we get (index $n$ is suppressed)
\begin{equation}
  \begin{split}
    \int_{\Gamma} & \wb{u}_{n}(\bd{P}_{n},\bd{X})
    \Bigl(-\tfrac{1}{2}(-\I\nabla_{\bd{X}}+\bd{P}_{n})^{2}-V(\bd{X})-\partial_{t}S_{n}\Bigr)
    b_{1}\wb{u}_{n}(\bd{p},\bd{Y}) \ud\bd{X} \\
    & + \int_{\Gamma}\Bigl\{\wb{u}_{n}(\bd{P}_{n},\bd{X})
    \bigl(\I\partial_{t}b_{0}-\dfrac{1}{2}b_{0} d \bigr)
    \wb{u}_{n}(\bd{p},\bd{Y}) +\wb{u}_{n}(\bd{P}_{n},\bd{X})
    \partial_{\bd{z}}\left(\nabla
      U(\bd{Q}_{n}) b_{0}\wb{u}_{n}(\bd{p},\bd{Y})Z_{n}^{-1}\right)\\
    &\qquad \qquad +\wb{u}_{n}(\bd{P}_{n},\bd{X})\partial_{\bd{z}}
    \Bigl(\left[(\nabla_{\bd{X}}b_{0}+\I b_{0}\bd{P}_{n})
      +(\partial_{t}\bd{P}_{n}-\I\partial_{t}\bd{Q}_{n})b_{0}\right]
    \wb{u}_{n}(\bd{p},\bd{Y})Z_{n}^{-1}\Bigr)\\
    & \qquad \qquad +\wb{u}_{n}(\bd{P}_{n},\bd{X})
    \dfrac{1}{2!}\tr\left[\partial_{\bd{z}}\bd{Q}_{n}(I-\nabla^2
      U(\bd{Q}_{n}))b_{0}
      \wb{u}_{n}(\bd{p},\bd{Y})Z_{n}^{-1}\right]\Bigr\}\ud\bd{X}=0.
  \end{split}
\end{equation}
The perpendicular terms in the $b_{j}$'s will now drop out and we can
symplify this equation to
\begin{multline}
  -\braket{u_{n}(\bd{P}_{n},\bd{X}),\partial_{\bd{z}}\left([\I u_{n}(\bd{P}_{n},\bd{X})\nabla_{\mathbf{P}_{n}} E_{n}-\nabla_{\bd{X}}u_{n}(\bd{P}_{n},\bd{X})-\I u_{n}(\bd{P}_{n},\bd{X})\bd{P}_{n}]a_{0}\wb{u}_{n}(\bd{p},\bd{Y}) Z_{n}^{-1}\right)} \\
  +\Bigl( \I\partial_{t}a_{0}
  -a_{0}\mathcal{A}(\bd{P}_{n})\cdot\nabla_{\bd{Q}_{n}}U-\dfrac{d}{2}a_{0}\Bigr)
  \wb{u}_{n}(\bd{p},\bd{Y})
  +\dfrac{1}{2}a_{0}\tr(\partial_{\bd{z}}\bd{Q}_{n}(I-\nabla_{\bd{Q}_{n}}^{2}U)Z_{n}^{-1})
  \wb{u}_{n}(\bd{p},\bd{Y}) =0.
\end{multline}
Using \eqref{eq:diffE}, we observe that
\begin{equation}\label{eq:iden}
\braket{u_{n}(\bd{P}_{n},\bd{X}),[\I u_{n}(\bd{P}_{n},\bd{X})\nabla_{\bd{P}_{n}}E_{n}-\nabla_{\bd{X}}u_{n}(\bd{P}_{n},\bd{X})-\I u_{n}(\bd{P}_{n},\bd{X})\bd{P}_{n}]\cdot\partial_{z}(a_0 \wb{u}_{n}(\bd{P}_{n},\bd{Y})Z_{n}^{-1})}=0.
\end{equation}
Hence, we arrive at
\begin{multline}\label{eq:furthermore}
  a_{0}\tr\bigl(\braket{u_{n}(\bd{P}_{n},\bd{X}),\partial_{\bd{z}} \cdot
    [\I u_{n}(\bd{P}_{n},\bd{X})\nabla_{\bd{P}_{n}} E_{n}-\nabla_{\bd{X}}u_{n}(\bd{P}_{n},\bd{X})
    -\I u_{n}(\bd{P}_{n},\bd{X})\bd{P}_{n}]}Z_{n}^{-1}\bigr) + \\
  +\left( \I\partial_{t}a_{0}-a_{0}\mathcal{A}(\bd{P}_{n})
    \cdot\nabla_{\bd{Q}_{n}}U-\dfrac{d}{2}a_{0}\right)
  +\dfrac{1}{2}a_{0}\tr\bigl(\partial_{\bd{z}}
  \bd{Q}_{n}(I-\nabla_{\bd{Q}_{n}}^{2}U)Z_{n}^{-1}\bigr) =0.
\end{multline}
To further simplify the equation, observe that
\begin{equation}\label{eq:iden2}
  \begin{split}
    & \hspace{-3em} \Bigl\langle u_{n}(\bd{P}_{n},\bd{X}), \partial_{\bd{z}} \cdot [\I u_{n}(\bd{P}_{n},\bd{X})\nabla_{\bd{P}_{n}}E_{n}-\nabla_{\bd{X}}u_{n}(\bd{P}_{n},\bd{X})-\I u_{n}(\bd{P}_{n},\bd{X})\bd{P}_{n}]\Bigr\rangle \\
    &=\I\braket{u_{n}(\bd{P}_{n},\bd{X}),\partial_{\bd{z}}u_{n}(\bd{P}_{n},\bd{X})}(\nabla_{\bd{P}_{n}}E_{n}-\bd{P}_{n})-\braket{u_{n}(\bd{P}_{n},\bd{X}),\partial_{\bd{z}}\nabla_{\bd{X}}u_{n}(\bd{P}_{n},\bd{X})}\\
&\qquad +\I(\partial_{\bd{z}}\nabla_{\bd{P}_{n}}E_{n}-\partial_{\bd{z}}\bd{P}_{n})\\
    &=\I\partial_{\bd{z}}\bd{P}_{n}\braket{u_{n}(\bd{P}_{n},\bd{X}),\partial_{\bd{P}_{n}}u_{n}(\bd{P}_{n},\bd{X})}(\nabla_{\bd{P}_{n}}E_{n}-\bd{P}_{n}) \\ &\qquad -\partial_{\bd{z}}\bd{P}_{n}\braket{u_{n}(\bd{P}_{n},\bd{X}),\nabla_{\bd{p}}\nabla_{\bd{X}}u_{n}(\bd{P}_{n},\bd{X})}_{\Gamma}
 +\I\partial_{\bd{z}}\bd{P}_{n}(\nabla_{\bd{P}_{n}}^{2}E_{n}-I)\\
&\stackrel{\eqref{eq:diffE2}}{=}\dfrac{1}{2}\I\partial_{\bd{z}}\bd{P}_{n}(\nabla_{\bd{P}_{n}}^{2}E_{n}-I).
\end{split}
\end{equation}
Putting this into \eqref{eq:furthermore}, we have
\begin{multline}
  \dfrac{1}{2}\I a_{0}\tr\left(\partial_{\bd{z}}\bd{P}_{n}
    (I-\nabla_{\bd{P}_{n}}^{2}E_{n})Z^{-1}\right)+\left(
    \I\partial_{t}a_{0}-a_{0}\mathcal{A}(\bd{P}_{n})\cdot\nabla_{\bd{Q}_{n}}U
    -\dfrac{d}{2}a_{0}\right)\\
  +\dfrac{1}{2}a_{0}\tr(\partial_{\bd{z}}\bd{Q}_{n}(I-\nabla_{\bd{Q}_{n}}^{2}U)Z_{n}^{-1})]=0.
\end{multline}
We arrive at \eqref{eq:a0} finally by noting that
\begin{equation}\label{eq:iden3}
  \dfrac{1}{2}a\tr\left[\partial_{\bd{z}}\bd{Q}_{n}Z_{n}^{-1}\right]
  +\dfrac{\I}{2}a\tr\left[\partial_{\bd{z}}\bd{P}_{n}Z_{n}^{-1}\right]
  =\dfrac{1}{2}a\tr\left[Z_{n}Z_{n}^{-1}\right]=\dfrac{d}{2}a.
\end{equation}

\subsubsection{Next order term $b_{n, 1}$}

To characterize $b_{n,1}$, we set the order $\Or(\veps^2)$ term in
\eqref{eq:fgaexpansion} to zero, we have
\begin{equation}\label{eq:solvcond}
\displaystyle\int_{\Gamma}\wb{u}_{n}(\bd{P}_{n},\bd{X})\left(L_{0}^{n}(b_{n,2}\wb{u}_{n}(\bd{p},\bd{Y}))+L_{1}^{n}(b_{n,1}\wb{u}_{n}(\bd{p},\bd{Y}))+L_{2}^{n}(b_{n,0}\wb{u}_{n}(\bd{p},\bd{Y}))\right)d\bd{X}=0.
\end{equation}
Let us first derive the equation for $a_{1}$. We start with
\eqref{eq:solvcond} written in expanded form
\begin{equation}\label{eq:exsolvcond}
\begin{split}
  \displaystyle\int_{\Gamma} \wb{u}_{n}(\bd{P}_{n},\bd{X})\biggl\{ & \dfrac{1}{2!} \tr\left[\partial_{\bd{z}}(\partial_{\bd{z}}((I-\nabla^2 U(\bd{Q}_{n}))b_{0}\wb{u}_{n}(\bd{p},\bd{Y})Z_{n}^{-1})Z_{n}^{-1})\right]\\
  & +\dfrac{2}{3!}\tr\left[\partial_{\bd{z}}(\partial_{\bd{z}}\bd{Q}_{n}\nabla^{3}U(\bd{Q}_{n})b_{0}\wb{u}_{n}(\bd{p},\bd{Y})(Z_{n}^{-1})^{2})\right]\\
  &+\dfrac{1}{3!}\tr\left[\partial_{\bd{z}}\bd{Q}_{n}\partial_{\bd{z}}(\nabla^{3}U(\bd{Q}_{n})b_{0}\wb{u}_{n}(\bd{p},\bd{Y})Z_{n}^{-1})Z_{n}^{-1}\right]-\dfrac{3}{4!}\tr\left[(\partial_{\bd{z}}\bd{Q}_{n})^{2}\nabla^{4}U(\bd{Q}_{n})b_{0}\wb{u}_{n}(\bd{p},\bd{Y})(Z_{n}^{-1})^{2}\right]\\
  &+\bigl(\I\partial_{t}b_{1}-\dfrac{1}{2}b_{1}d\bigr)\wb{u}_{n}(\bd{p},\bd{Y})+\partial_{\bd{z}}\left(\nabla U(\bd{Q}_{n})b_{1}\wb{u}_{n}(\bd{p},\bd{Y})Z_{n}^{-1}\right)\\
  &+\partial_{\bd{z}}\left(\left[(\nabla_{\bd{X}}b_{1}+\I b_{1}\bd{P}_{n})+(\partial_{t}\bd{P}_{n}-\I\partial_{t}\bd{Q}_{n})b_{1}\right]\wb{u}_{n}(\bd{p},\bd{Y})Z_{n}^{-1}\right)\\
  &+\dfrac{1}{2!}\tr\left[\partial_{\bd{z}}\bd{Q}_{n}(I-\nabla^2
    U(\bd{Q}_{n}))b_{1}\wb{u}_{n}(\bd{p},\bd{Y})Z_{n}^{-1}\right]+\left.\left(-H_{\bd{P}_{n}}+E(\bd{P}_{n})\right)b_{2}\wb{u}_{n}(\bd{p},\bd{Y})\right\}\,d\bd{X}=0.
\end{split}
\end{equation}
Making use of the Hamiltonian flow \eqref{eq:Hamflow} and the identity
\eqref{eq:iden}, we arrive at
\begin{equation}
\begin{split}
  & -\tr \Bigl(\braket{u_{\bd{P}_{n}}, \partial_{\bd{z}}\cdot[u(\I\nabla E_{n}(\bd{P}_{n}))-\nabla_{\bd{X}}u-\I u\bd{P}_{n}](a_{1})}Z_{n}^{-1}\Bigr)\\
  & \hspace{4em} +\Bigl(\I\partial_{t}a_{1}-a_{1}\mathcal{A}(\bd{P}_{n})\cdot \nabla_{\bd{Q}_{n}}U-\dfrac{d}{2}a_{1}\Bigr) +a_{0}\dfrac{1}{2}\tr\Bigl(\partial_{\bd{z}}(\partial_{\bd{z}}[(I-\nabla_{\bd{Q}_{n}}^{2}U)Z_{n}^{-1}]Z_{n}^{-1})\Bigr)\\
  & \hspace{4em} +a_{1}\dfrac{1}{2}\tr\Bigl(\partial_{\bd{z}}\bd{Q}_{n}
  (I-\nabla_{\bd{Q}_{n}}^{2}U)Z_{n}^{-1}\Bigr) + \dfrac{2}{3!}a_{0}\tr\Bigl(\partial_{\bd{z}}(\partial_{\bd{z}}\bd{Q}_{n}\nabla_{\bd{Q}_{n}}^{3}U(Z_{n}^{-1})^{2})\Bigr)\\
  & \hspace{4em} +\dfrac{1}{3!}a_{0}\tr\Bigl(\partial_{\bd{z}}\bd{Q}_{n}
  \partial_{\bd{z}}(\nabla_{\bd{Q}_{n}}^{3}UZ_{n}^{-1})Z_{n}^{-1}\Bigr)
  -\dfrac{3}{4!}a_{0}\tr\Bigl((\partial_{\bd{z}}\bd{Q}_{n})^{2}
  \nabla_{\bd{Q}_{n}}^{4}U(Z_{n}^{-1})^{2}\Bigr)=0.
\end{split}
\end{equation}
Then using \eqref{eq:iden2} and \eqref{eq:iden3}, upon simplification
we obtain the equation for $a_{n, 1}$
\begin{equation}\label{eq:a1}
\begin{split}
\partial_{t}a_{n,1}=&-\I a_{n,1}\mathcal{A}(\bd{P}_{n})\cdot \nabla_{\bd{Q}_{n}}U+\dfrac{1}{2}a_{n,1}\tr\left(\partial_{\bd{z}}\bd{P}_{n}(\nabla_{\bd{P}_{n}}^{2}E_{n})Z_{n}^{-1}\right)-\dfrac{\I}{2}a_{n,1}\tr\left(\partial_{\bd{z}}\bd{Q}_{n}(\nabla_{\bd{Q}_{n}}^{2}U)Z_{n}^{-1}\right)\\
& +\dfrac{\I}{2}a_{n,0}\tr\left(\partial_{\bd{z}}(\partial_{\bd{z}}[(I-\nabla_{\bd{Q}_{n}}^{2}U)Z_{n}^{-1}]Z_{n}^{-1})\right)
+\dfrac{2\I}{3!}a_{n,0}\tr\left(\partial_{\bd{z}}(\partial_{\bd{z}}\bd{Q}_{n}\nabla_{\bd{Q}_{n}}^{3}U(Z_{n}^{-1})^{2})\right)\\
&+\dfrac{\I}{3!}a_{n,0}\tr\left(\partial_{\bd{z}}\bd{Q}_{n}\partial_{\bd{z}}(\nabla_{\bd{Q}_{n}}^{3}UZ_{n}^{-1})Z_{n}^{-1}\right)-\dfrac{3\I}{4!}a_{n,0}\tr\left((\partial_{\bd{z}}\bd{Q}_{n})^{2}\nabla_{\bd{Q}_{n}}^{4}U(Z_{n}^{-1})^{2}\right).\\
\end{split}
\end{equation}

Define the operator $\mathcal{Q}=\Id-\Pi_{n}$ where $\Pi_{n}$ is the
projection operator onto the nth Bloch wave. $b_{n,1}^{\perp}$
satisfies
$\Pi_{n}b_{n,1}^{\perp}=\braket{u_{n}(\bd{P}_{n},\bd{x}),b_{n,1}^{\perp}}=0$,
and is hence determined by applying $\mathcal{Q}$ to
$L_{0}^{n}(b_{n,1}\wb{u}_{n}(\bd{p},\bd{Y}))=-L_{1}^{n}(b_{n,0}\wb{u}_{n}(\bd{p},\bd{Y}))$. We
obtain
\begin{equation}\label{eq:a1perp}
b_{n,1}^{\perp}\wb{u}_{n}(\bd{p},\bd{Y})=-\left(L_{0}^{n}\right)^{-1}\mathcal{Q}\left(L_{1}^{n}(b_{n,0}\wb{u}_{n}(\bd{p},\bd{Y}))\right).
\end{equation}
Note that the inverse of the operator $L_{0}^{n}$ can be defined on its range.

Thus, we have obtained the equations for $a_{n, 0}$ \eqref{eq:a0},
$a_{n, 1}$ \eqref{eq:a1}, and $b_{n,1}^{\perp}$
\eqref{eq:a1perp}. This can be continued to higher orders.  Let us
summarize the estimate of these terms in the following propositions.

\begin{prop}\label{amplitudebounds}
  For each $k\in\mathbb{N}$, the amplitudes $a_{n,0}$ and $a_{n,1}$,
  given by \eqref{eq:a0} and \eqref{eq:a1} satisfy
  \begin{equation}
    \sup_{t\in[0,T]}M_{k}[a_{n,0}]<\infty,
    \qquad \text{and} \qquad \sup_{t\in[0,T]}M_{k}[a_{n,1}]<\infty.
  \end{equation}
\end{prop}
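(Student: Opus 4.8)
The plan is to treat \eqref{eq:a0} and \eqref{eq:a1}, for each frozen $(\bd q,\bd p)$, as linear first-order ODEs in $t$ with phase-space-dependent coefficients, solve them by Duhamel's formula, and then propagate the resulting bounds to all $(\bd q,\bd p)$-derivatives by induction on the order of differentiation. Both equations carry the same homogeneous coefficient
\begin{equation*}
  c(t,\bd q,\bd p)=\tfrac12\tr\!\bigl(\partial_{\bd z}\bd P_n\,(\nabla^2 E_n)(\bd P_n)\,Z_n^{-1}\bigr)-\I\,\mc A_n(\bd P_n)\cdot\nabla U(\bd Q_n)-\tfrac{\I}{2}\tr\!\bigl(\partial_{\bd z}\bd Q_n\,(\nabla^2 U)(\bd Q_n)\,Z_n^{-1}\bigr),
\end{equation*}
so that $\partial_t a_{n,0}=c\,a_{n,0}$ with $a_{n,0}(0)=2^{d/2}$, and $\partial_t a_{n,1}=c\,a_{n,1}+g$, where $g=g(t,\bd q,\bd p)$ collects the remaining terms of \eqref{eq:a1}, each of which is $a_{n,0}$ times a trace of products of $\partial_{\bd z}\bd Q_n$, $Z_n^{-1}$, the derivatives $(\nabla^j U)(\bd Q_n)$ with $2\le j\le 4$, and their $\partial_{\bd z}$-derivatives.

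First I would assemble the uniform bounds on the ingredients. By Propositions~\ref{kappabounds} and \ref{Zinvbound}, $\partial_{\bd z}\bd Q_n$, $\partial_{\bd z}\bd P_n$, $Z_n^{-1}$, and all of their $(\bd q,\bd p)$-derivatives, are bounded on $\Omega\times[0,T]$; since $E_n$ is smooth on the compact torus $\Gamma^\ast$ and $U$ is subquadratic, the compositions $(\nabla^2 E_n)(\bd P_n)$ and $(\nabla^j U)(\bd Q_n)$ for $j\ge 2$, together with all their derivatives, are bounded on $\Omega\times[0,T]$ (here one uses that $\bd Q_n,\bd P_n$ have bounded derivatives and that every derivative of $\nabla^2 U$ is bounded), and $\mc A_n$ with all its derivatives is bounded on $\Gamma^\ast$. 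Consequently $\sup_{[0,T]}M_k[g]$ is controlled by $\sup_{[0,T]}M_k[a_{n,0}]$ times a constant, so the bound for $a_{n,1}$ will follow from that for $a_{n,0}$ by the same argument applied to the inhomogeneous equation.

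For $a_{n,0}$ I would use the explicit solution $a_{n,0}(t,\bd q,\bd p)=2^{d/2}\exp\bigl(\int_0^t c(s,\bd q,\bd p)\,\ud s\bigr)$. The bound $M_0[a_{n,0}]\le 2^{d/2}e^{CT}$ holds because $\Re c$ is uniformly bounded on $\Omega\times[0,T]$ (the only non-bounded ingredient of $c$, namely $\nabla U(\bd Q_n)$, enters only through the purely imaginary combination $-\I\,\mc A_n(\bd P_n)\cdot\nabla U(\bd Q_n)$). For higher derivatives, set $w_\alpha=\partial_{\bd q}^{\alpha_q}\partial_{\bd p}^{\alpha_p}a_{n,0}$; differentiating \eqref{eq:a0} gives $\partial_t w_\alpha=c\,w_\alpha+\sum_{0\ne\beta\le\alpha}\binom{\alpha}{\beta}(\partial^\beta c)\,w_{\alpha-\beta}$, whence $\partial_t|w_\alpha|\le(\Re c)\,|w_\alpha|+\sum_{0\ne\beta\le\alpha}\binom{\alpha}{\beta}|\partial^\beta c|\,|w_{\alpha-\beta}|$ — so that it is $\Re c$, not $c$, that multiplies $|w_\alpha|$ — and a Gronwall estimate together with induction on $|\alpha|$ yields $\sup_{[0,T]}M_k[a_{n,0}]<\infty$ for every $k$. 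Running the same Duhamel/Gronwall scheme on $\partial_t a_{n,1}=c\,a_{n,1}+g$ and its differentiated versions, now using the bounds just obtained for $a_{n,0}$ and for $g$, gives $\sup_{[0,T]}M_k[a_{n,1}]<\infty$ for every $k$.

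I expect the main difficulty to lie in the Berry-phase term $\mc A_n(\bd P_n)\cdot\nabla U(\bd Q_n)$ common to \eqref{eq:a0} and \eqref{eq:a1}: unlike the trace terms, the factor $\nabla U$ is, under subquadraticity alone, controlled only through its derivatives of positive order (via $\partial^\beta\bigl((\nabla U)(\bd Q_n)\bigr)=(\nabla^{1+|\beta|}U)(\bd Q_n)\,\partial^\beta\bd Q_n+\cdots$), so obtaining the bounds on $a_{n,0}$ and its phase-space derivatives over all of $\Omega$ forces one to use the structure of the problem — that this term carries a purely imaginary prefactor, hence does not enter the real part governing the magnitude of the amplitude, together with the Hamilton relation $\partial_t\bd P_n=-\nabla U(\bd Q_n)$ and the fact that $\bd P_n$ lives in the compact $\Gamma^\ast$. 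The remaining estimates are routine consequences of Propositions~\ref{kappabounds} and \ref{Zinvbound} and the chain rule.
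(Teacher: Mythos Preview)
Your approach is essentially the paper's own---reduce to linear scalar ODEs in $t$ with $(\bd q,\bd p)$-dependent coefficients and close by Gronwall and induction on the order of differentiation, using Propositions~\ref{kappabounds} and \ref{Zinvbound}. The paper's proof is a two-line sketch that simply asserts the right-hand sides of \eqref{eq:a0} and \eqref{eq:a1} are bounded by a constant times the amplitude and then invokes Gronwall; you are actually more careful than the paper on the one delicate point, namely that under subquadraticity alone $\nabla U(\bd Q_n)$ need not be bounded on $\Omega$, so the coefficient $c$ is not obviously in $L^\infty(\Omega\times[0,T])$. Your remedy---observe that $\mc A_n$ is real so the Berry-phase contribution to $c$ is purely imaginary and drops out of $\Re c$, while every $(\bd q,\bd p)$-derivative of $c$ of positive order involves only $\nabla^{\ge 2}U$ and is therefore bounded---is exactly what is needed to make the Gronwall-plus-induction argument go through, and it also correctly separates the inhomogeneous source $g$ in \eqref{eq:a1}, which the paper's sketch elides.
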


\begin{proof}
By \eqref{kappabounds}, \eqref{Zinvbound} and \eqref{ubounds}, we see that the right hand side of \eqref{eq:a0} and \eqref{eq:a1} are bounded by some constants independent of $\bd{q}$ and $\bd{p}$ times $a_{n,0}$ and $a_{n,1}$, respectively. An application of Gronwall's inequality yields the result.
\end{proof}

\begin{prop}\label{perpbounds}
  For each $k\in\mathbb{N}$ we have that
  \begin{equation}
    \sup_{t\in[0,T]} M_{k}[b_{n,1}^{\perp} \wb{u}_{n}(\bd{P}_{n},\bd{Y})]<\infty.
  \end{equation}
\end{prop}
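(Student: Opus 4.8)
The plan is to read the estimate off the closed form \eqref{eq:a1perp}, namely $b_{n,1}^{\perp}\wb{u}_{n}(\bd{p},\bd{Y}) = -(L_{0}^{n})^{-1}\mathcal{Q}\bigl(L_{1}^{n}(b_{n,0}\wb{u}_{n}(\bd{p},\bd{Y}))\bigr)$, by controlling its three ingredients separately --- the source $L_{1}^{n}(b_{n,0}\wb{u}_{n})$, the band projection $\mathcal{Q}=\Id-\Pi_{n}$, and the reduced resolvent $(L_{0}^{n})^{-1}$ on $\ran\mathcal{Q}$ --- and then assembling them via the Leibniz rule. Throughout, $\bd{P}_{n}=\bd{P}_{n}(t,\bd{q},\bd{p})$ and all bounds are to be uniform for $t\in[0,T]$; the symbol $M_{k}$ is understood to include the supremum over the frozen variables $\bd{X},\bd{Y}$ as well.

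\emph{Step 1 (source term).} With $b_{n,0}=a_{n,0}(t,\bd{q},\bd{p})\,u_{n}(\bd{P}_{n},\bd{X})$, the function $L_{1}^{n}(b_{n,0}\wb{u}_{n}(\bd{p},\bd{Y}))$ is a finite sum of products of quantities each of which has $(\bd{q},\bd{p})$-derivatives of every order bounded uniformly in $t\in[0,T]$: $a_{n,0}$ and $\partial_{t}a_{n,0}$ (the latter through \eqref{eq:a0}), by Proposition~\ref{amplitudebounds}; the entries of $\partial_{\bd{z}}\bd{Q}_{n}$, $\partial_{\bd{z}}\bd{P}_{n}$, $\partial_{t}\bd{Q}_{n}$, $\partial_{t}\bd{P}_{n}$, by Proposition~\ref{kappabounds}; the entries of $Z_{n}^{-1}$, by Proposition~\ref{Zinvbound}; $u_{n}(\bd{P}_{n},\bd{X})$, $\nabla_{\bd{X}}u_{n}(\bd{P}_{n},\bd{X})$, $\nabla_{\bd{p}}u_{n}(\bd{P}_{n},\bd{X})$ and $\wb{u}_{n}(\bd{p},\bd{Y})$, by Proposition~\ref{ubounds} together with the smoothness of $u_{n}$ on the compact set $\Gamma^{\ast}\times\Gamma$; and $\nabla^{2}U(\bd{Q}_{n})$, $\nabla^{3}U(\bd{Q}_{n})$, $\nabla^{4}U(\bd{Q}_{n})$, which are bounded because $U$ is subquadratic. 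The terms carrying a bare $\nabla U(\bd{Q}_{n})$ cancel in $L_{1}^{n}(b_{n,0}\wb{u}_{n})$ exactly as in the derivation of \eqref{eq:a0}, using $\partial_{t}\bd{P}_{n}=-\nabla_{\bd{Q}_{n}}U$ and $\mathcal{A}_{n}(\bd{P}_{n})u_{n}-\I\nabla_{\bd{p}}u_{n}=-\I\mathcal{Q}\nabla_{\bd{p}}u_{n}$. Hence $\sup_{t\in[0,T]}M_{k}\bigl[L_{1}^{n}(b_{n,0}\wb{u}_{n}(\bd{p},\bd{Y}))\bigr]<\infty$ for every $k$, and since $\mathcal{Q}$ is a contraction on $L^{2}(\Gamma)$ the same holds for $\mathcal{Q}L_{1}^{n}(b_{n,0}\wb{u}_{n})$, which moreover lies in $\ran(\Id-\Pi_{n}(\bd{P}_{n}))$, so the composition in \eqref{eq:a1perp} is well defined.

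\emph{Step 2 (projection and reduced resolvent).} This is where the hypotheses of Theorem~\ref{theoremA} enter. Set $g:=\inf_{\bd{\xi}\in\Gamma^{\ast}}\dist\bigl(E_{n}(\bd{\xi}),\,\spec(H_{\bd{\xi}})\setminus\{E_{n}(\bd{\xi})\}\bigr)$; by the non-intersection assumption, continuity of the bands, and compactness of $\Gamma^{\ast}$, we have $g>0$. Writing the spectral projection as a Riesz integral $\Pi_{n}(\bd{\xi})=\frac{1}{2\pi\I}\oint_{\gamma_{\bd{\xi}}}(z-H_{\bd{\xi}})^{-1}\ud z$ over a contour enclosing only $E_{n}(\bd{\xi})$ (chosen locally independent of $\bd{\xi}$), and using that $\bd{\xi}\mapsto H_{\bd{\xi}}$ is a polynomial --- hence $\mathcal{C}^{\infty}$ --- family, one shows that $\bd{\xi}\mapsto\Pi_{n}(\bd{\xi})$ and $\bd{\xi}\mapsto(H_{\bd{\xi}}-E_{n}(\bd{\xi}))^{-1}\big|_{\ran(\Id-\Pi_{n}(\bd{\xi}))}$ are $\mathcal{C}^{\infty}$ in the relevant operator norms, with all $\bd{\xi}$-derivatives bounded uniformly on $\Gamma^{\ast}$ (in particular the reduced resolvent has norm at most $1/g$, with uniform gain of $\bd{X}$-regularity by elliptic estimates whose constants are uniform on the compact $\Gamma^{\ast}$). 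Since $L_{0}^{n}=-(H_{\bd{P}_{n}}-E_{n}(\bd{P}_{n}))$, its inverse on $\ran\mathcal{Q}$ is this reduced resolvent at $\bd{\xi}=\bd{P}_{n}$. Composing with $(\bd{q},\bd{p})\mapsto\bd{P}_{n}(t,\bd{q},\bd{p})$, whose derivatives are bounded on $[0,T]\times\Omega$ by Proposition~\ref{kappabounds}, the chain rule gives that $(\bd{q},\bd{p})\mapsto\Pi_{n}(\bd{P}_{n})$ and $(\bd{q},\bd{p})\mapsto(L_{0}^{n})^{-1}$ have $(\bd{q},\bd{p})$-derivatives of every order bounded in operator norm, uniformly for $t\in[0,T]$.

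\emph{Step 3 (assembly and main obstacle).} Differentiating the product $(L_{0}^{n})^{-1}\mathcal{Q}L_{1}^{n}(b_{n,0}\wb{u}_{n})$ in $(\bd{q},\bd{p})$ by the Leibniz rule expresses each derivative as a finite sum of products of derivatives of $(L_{0}^{n})^{-1}$, of $\mathcal{Q}$, and of $L_{1}^{n}(b_{n,0}\wb{u}_{n})$, each bounded uniformly for $t\in[0,T]$ by Steps 1 and 2; this yields $\sup_{t\in[0,T]}M_{k}[b_{n,1}^{\perp}\wb{u}_{n}(\bd{p},\bd{Y})]<\infty$ for every $k$. (The discrepancy between $\wb{u}_{n}(\bd{p},\bd{Y})$ in \eqref{eq:a1perp} and $\wb{u}_{n}(\bd{P}_{n},\bd{Y})$ in the statement is immaterial, as that scalar factor and all its $(\bd{q},\bd{p})$-derivatives are uniformly bounded by Proposition~\ref{ubounds}.) I expect the bulk of the work to be Step 2 --- the uniform-in-$\bd{\xi}$ smoothness and boundedness of the spectral projection and of the reduced resolvent of $H_{\bd{\xi}}$ at $E_{n}(\bd{\xi})$ --- since this is the place where the spectral-gap hypothesis and the compactness of the Brillouin zone are genuinely used; everything else is bookkeeping built on Propositions~\ref{kappabounds}--\ref{amplitudebounds}.
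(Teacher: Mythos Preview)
Your proposal is correct and, in fact, more carefully organized than the paper's own argument. The route, however, is genuinely different.

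The paper proceeds by writing out the eigenfunction expansion of the reduced resolvent,
\[
(L_{0}^{n})^{-1}\Phi=\sum_{m\neq n}\frac{\langle u_{m}(\bd{P}_{n},\cdot),\Phi\rangle_{L^{2}(\Gamma)}}{E_{n}(\bd{P}_{n})-E_{m}(\bd{P}_{n})}\,u_{m}(\bd{P}_{n},\bd{X}),
\]
bounds every denominator from below by the uniform gap $g$, and then invokes Bessel's inequality to control the remaining sum. This is concrete but leaves several details implicit: the passage from $L^{2}$ control to the pointwise $M_{k}$ seminorms, and the handling of $(\bd{q},\bd{p})$-derivatives falling on the infinite sum, are not spelled out.

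Your argument bypasses the infinite sum entirely by treating $\Pi_{n}(\bd{\xi})$ and the reduced resolvent as smooth operator-valued functions of $\bd{\xi}$ via the Riesz contour formula, with elliptic regularity supplying the upgrade from $L^{2}(\Gamma)$ to $L^{\infty}(\Gamma)$ bounds. Composing with the smooth map $(\bd{q},\bd{p})\mapsto\bd{P}_{n}$ and applying Leibniz then gives the $M_{k}$ control directly. This is more abstract but also more robust: it makes transparent where the gap hypothesis and the compactness of $\Gamma^{\ast}$ are used, it handles all derivative orders uniformly, and it would carry over unchanged to situations where an explicit Bloch basis for the complementary bands is unavailable. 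Your observation about the cancellation of the bare $\nabla U(\bd{Q}_{n})$ terms in $L_{1}^{n}(b_{n,0}\wb{u}_{n})$ is also a point the paper does not make explicit but which is needed for the source to be bounded under the subquadratic assumption.
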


\begin{proof}
 The equation for $b_{n,1}^{\perp}$ is given by equation
\eqref{eq:a1perp}. We thus obtain a bound by using the spectrum of $L_{0}^{n}$. We can write
\begin{equation}\label{eq:L0inv}
  (L_{0}^{n})^{-1}(\Phi)=\displaystyle\sum_{m\neq n}\dfrac{\braket{u_{m}(\bd{P}_{m},\cdot),
      \Phi(\cdot,\bd{Y},\bd{q},\bd{p})}_{L^{2}(\Gamma)}u_{m}(\bd{P}_{m},\bd{X})}{E_{n}(\bd{P}_{n})-E_{m}(\bd{P}_{m})}.
\end{equation}
Let $g=\displaystyle\min_{\bd{\xi}\in[-\pi,\pi]^{d}}\{|E_{n}(\bd{\xi})-E_{n-1}(\bd{\xi})|, |E_{n}(\bd{\xi})-E_{n+1}(\bd{\xi})|\}$. Then for each $k\in\mathbb{N}$, we obtain
\begin{equation}
\begin{split}
  M_{k}[b_{n,1}^{\perp}\wb{u}_{n}(\bd{P}_{n},\bd{Y})]&\leq
  M_{k}\left[\dfrac{1}{g} \sum_{m\neq
      n}\braket{u_{m}(\bd{P}_{m},\cdot),
      b_{n,0}(t,\cdot,\bd{q},\bd{p})\wb{u}_{n}(\bd{p},\bd{Y})}_{L^{2}(\Gamma)}u_{n}(\bd{P}_{n},\bd{X})\right]\\
  &=M_{k}\left[\dfrac{\wb{u}_{n}(\bd{p},\bd{Y})}{g}\displaystyle\sum_{m\neq
      n}a_{n,0}(t,\bd{q},\bd{p})\braket{u_{m}(\bd{P}_{m},\cdot),
      u_{n}(\bd{P}_{n},\cdot)}_{L^{2}(\Gamma)}u_{n}(\bd{P}_{n},\bd{X})\right].
\end{split}
\end{equation}
Hence, by Propositions~\ref{ubounds} and \ref{amplitudebounds}, it
suffices to control
\begin{equation}
  M_{k}\biggl[\displaystyle\sum_{m\neq
    n}\braket{u_{m}(\bd{P}_{m},\bd{X}),u_{n}(\bd{P}_{n},\bd{X})}_{L^{2}(\Gamma)}\biggr].
\end{equation}
Since $\int_{\Gamma}|u_{n}(\bd{\xi},\bd{x})|^{2}d\bd{x}=1$, Bessel's
inequality implies that the above is finite.
\end{proof}

\subsection{Proof of Theorem~\ref{theoremA}}

We will need the following estimate, which is proved in~\cite{Ge:02}*{Lemma 2.8}.
\begin{lemma}\label{Remainderlemma}
Suppose $H(\veps)$ is a family of self-adjoint operators for $\veps>0$. Suppose $\psi(t,\veps)$ belongs to the domain of $H(\veps)$, is continuously differentiable in $t$ and approximately solves the Schrodinger equation in the sense that
\begin{equation}
\I\veps\dfrac{\partial\psi}{\partial t}(t,\veps)=H(\veps)\psi(t,\veps)+\zeta(t,\veps),
\end{equation}
where $\zeta(t,\veps)$ satisfies
\begin{equation}
||\zeta(t,\veps)||\leq\mu(t,\veps).
\end{equation}
Then,
\begin{equation}
e^{-\I tH(\veps)/\veps}\psi(0,\veps)-\psi(t,\veps)\leq\veps^{-1}\int_{0}^{t}\mu(s,\veps)ds.
\end{equation}
\end{lemma}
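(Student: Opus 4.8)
The plan is to prove this by a standard Duhamel (variation-of-parameters) argument that exploits the unitarity of the Schr\"odinger propagator. Write $\mathscr{U}_t^{\veps} = e^{-\I t H(\veps)/\veps}$ for the strongly continuous one-parameter unitary group generated by the self-adjoint operator $H(\veps)/\veps$ (Stone's theorem). The target is the Duhamel identity
\begin{equation*}
  \mathscr{U}_t^{\veps}\,\psi(0,\veps) - \psi(t,\veps) = \frac{\I}{\veps}\int_0^t \mathscr{U}_{t-s}^{\veps}\,\zeta(s,\veps)\ud s,
\end{equation*}
from which the bound is immediate: each $\mathscr{U}_{t-s}^{\veps}$ is an isometry on $L^2(\RR^d)$, so taking norms, moving them inside the Bochner integral, and using $\norm{\zeta(s,\veps)} \le \mu(s,\veps)$ gives $\norm{\mathscr{U}_t^{\veps}\psi(0,\veps) - \psi(t,\veps)} \le \veps^{-1}\int_0^t \mu(s,\veps)\ud s$, which is the asserted estimate (with a norm understood on its left-hand side).

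To establish the identity, I would first set $w(t) := \mathscr{U}_{-t}^{\veps}\psi(t,\veps) = e^{\I t H(\veps)/\veps}\psi(t,\veps)$. Since by hypothesis $\psi(t,\veps)$ lies in the domain of $H(\veps)$ and is continuously differentiable in $t$, and since $\mathscr{U}_{-t}^{\veps}$ preserves that domain and commutes with $H(\veps)$ by the functional calculus, the map $w$ is differentiable with
\begin{equation*}
  \frac{\ud}{\ud t}w(t) = \frac{\I}{\veps}\, H(\veps)\,\mathscr{U}_{-t}^{\veps}\psi(t,\veps) + \mathscr{U}_{-t}^{\veps}\,\partial_t\psi(t,\veps).
\end{equation*}
Next I would substitute $\partial_t\psi(t,\veps) = -\tfrac{\I}{\veps}\bigl(H(\veps)\psi(t,\veps) + \zeta(t,\veps)\bigr)$, which is the approximate Schr\"odinger equation rewritten, and use $\mathscr{U}_{-t}^{\veps}H(\veps) = H(\veps)\mathscr{U}_{-t}^{\veps}$; the two terms containing $H(\veps)\psi$ cancel, leaving $\frac{\ud}{\ud t}w(t) = -\tfrac{\I}{\veps}\,\mathscr{U}_{-t}^{\veps}\zeta(t,\veps)$. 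Integrating this from $0$ to $t$ (the right-hand side is continuous in $L^2$ by strong continuity of the group and of $\zeta$), using $w(0) = \psi(0,\veps)$, and then applying $\mathscr{U}_t^{\veps}$ together with the group law $\mathscr{U}_t^{\veps}\mathscr{U}_{-s}^{\veps} = \mathscr{U}_{t-s}^{\veps}$ produces exactly the Duhamel identity displayed above.

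The computation itself is short, so the main (and essentially the only) obstacle is functional-analytic bookkeeping: justifying rigorously that $t\mapsto \mathscr{U}_{-t}^{\veps}\psi(t,\veps)$ is genuinely differentiable with the product-rule expression used above — this is precisely where the hypotheses that $\psi(t,\veps)$ remains in the domain of $H(\veps)$ and is $C^1$ in $t$ are needed — together with the legitimacy of pulling the norm through the Bochner integral. Since this statement is \cite{Ge:02}*{Lemma~2.8}, one may alternatively just cite it.
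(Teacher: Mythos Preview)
Your Duhamel argument is correct and is in fact the standard proof of this estimate; the paper itself does not prove the lemma at all but simply cites \cite{Ge:02}*{Lemma~2.8}, which you also note as an alternative. So your proposal goes beyond what the paper does, and the content is fine.
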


Moreover, for the Fourier integral operator, we have
\begin{lemma}\label{FIObound}
If, for fixed $\bd{x},\bd{y}\in\mathbb{R}^{d}$, $u(\bd{x},\bd{y},\bd{q},\bd{p})\in L^{\infty}(\Omega;\mathbb{C})$, for each $n\in\mathbb{N}$ and any $t$, $\mathcal{I}^{\veps}(u)$ can be extended to a linear bounded operator on $L^{2}(\mathbb{R}^{d},\mathbb{C})$, and we have
\begin{equation}
||\mathcal{I}(u)||_{\mathscr{L}(L^{2}(\mathbb{R}^{d};\mathbb{C}))}\leq||u||_{L^{\infty}(\mathbb{R}^{2d};\mathbb{C})}.
\end{equation}
\end{lemma}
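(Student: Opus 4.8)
The plan is to exhibit $\mathcal{I}^{\veps}(u)$ as a composition of a wave-packet analysis map (essentially the semiclassical windowed Bloch transform $\mathcal{W}^{\veps}$), multiplication by the symbol, and a synthesis map assembled from Gaussians centered at the flowed points $(\bd{Q}_{n},\bd{P}_{n})=\kappa_{n}(t)(\bd{q},\bd{p})$, and to estimate $\mathcal{I}^{\veps}(u)$ through the sesquilinear form $\braket{\psi,\mathcal{I}^{\veps}(u)\varphi}$ for arbitrary $\psi,\varphi\in L^{2}(\mathbb{R}^{d})$. Substituting the explicit phase and using the identity $e^{\I\Phi/\veps}=e^{\I S_{n}/\veps}\,\overline{G^{\veps}_{\bd{q},\bd{p}}(\bd{y})}\,G^{\veps}_{\bd{Q}_{n},\bd{P}_{n}}(\bd{x})$, the form becomes an integral over $\Omega$ of the product of $e^{\I S_{n}/\veps}$, a factor pairing $\psi$ against $G^{\veps}_{\bd{Q}_{n},\bd{P}_{n}}$, the symbol $u$, and a factor pairing $\varphi$ against $G^{\veps}_{\bd{q},\bd{p}}$. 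Bounding $\abs{u}\leq\norm{u}_{L^{\infty}}$ pointwise and applying the Cauchy--Schwarz inequality in the $(\bd{q},\bd{p})$-integration (after splitting each Gaussian weight as $e^{-|\cdot|^{2}/2\veps}=(e^{-|\cdot|^{2}/2\veps})^{1/2}(e^{-|\cdot|^{2}/2\veps})^{1/2}$) yields $\abs{\braket{\psi,\mathcal{I}^{\veps}(u)\varphi}}\leq(2\pi\veps)^{-3d/2}\norm{u}_{L^{\infty}}\,I_{\psi}^{1/2}I_{\varphi}^{1/2}$, where $I_{\varphi}=\iint_{\Omega}\abs{\braket{G^{\veps}_{\bd{q},\bd{p}}u_{n}(\bd{p},\cdot/\veps),\varphi}}^{2}\,\mathrm{d}\bd{q}\,\mathrm{d}\bd{p}$ and $I_{\psi}$ is the same quantity with $\varphi,(\bd{q},\bd{p})$ replaced by $\psi,(\bd{Q}_{n},\bd{P}_{n})$.

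The factor $I_{\varphi}$ is controlled immediately by the semiclassical analogue of the reconstruction formula \eqref{eq:reconstruction}: since $\mathcal{W}^{\veps}$ is an isometry, $\sum_{m}\iint_{\Omega}\abs{(\mathcal{W}^{\veps}\varphi)_{m}}^{2}=\norm{\varphi}_{L^{2}}^{2}$, so keeping only the $n$-th band (and noting that $\bd{p}$ is already restricted to the bounded set $\Gamma^{\ast}$) gives $I_{\varphi}\leq(2\pi\veps)^{3d/2}2^{-d/2}\norm{\varphi}_{L^{2}}^{2}$, the normalization constant $2^{d/4}/(2\pi\veps)^{3d/4}$ in the definition of $\mathcal{W}^{\veps}$ producing precisely this prefactor.

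For $I_{\psi}$ I would change variables along the Hamiltonian flow, $(\bd{q},\bd{p})\mapsto(\bd{Q}_{n},\bd{P}_{n})=\kappa_{n}(t)(\bd{q},\bd{p})$; since $\kappa_{n}(t)$ is a canonical transformation its Jacobian is identically $1$, so $I_{\psi}=\iint_{\kappa_{n}(t)(\Omega)}\abs{\braket{G^{\veps}_{\bd{Q},\bd{P}}u_{n}(\bd{P},\cdot/\veps),\psi}}^{2}\,\mathrm{d}\bd{Q}\,\mathrm{d}\bd{P}$, where $u_{n}$ is extended to all quasimomenta by its gauge-periodicity over the reciprocal lattice $2\pi\mathbb{Z}^{d}$. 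A shift $\bd{P}\mapsto\bd{P}+\bd{G}$, $\bd{G}\in2\pi\mathbb{Z}^{d}$, multiplies $G^{\veps}_{\bd{Q},\bd{P}}u_{n}(\bd{P},\cdot/\veps)$ only by the unimodular constant $e^{-\I\bd{G}\cdot\bd{Q}/\veps}$ (the factor $e^{\I\bd{G}\cdot(\bd{x}-\bd{Q})/\veps}$ from the Gaussian cancels the factor $e^{-\I\bd{G}\cdot\bd{x}/\veps}$ coming from $u_{n}$), so the integrand is $2\pi\mathbb{Z}^{d}$-periodic in $\bd{P}$. Folding the momentum back into $\Gamma^{\ast}$ and using that the multiplicity $\#\{\bd{G}:(\bd{Q},\bd{P}+\bd{G})\in\kappa_{n}(t)(\Omega)\}$ is bounded uniformly for $t\in[0,T]$ --- which follows from Proposition \ref{kappabounds} together with the elementary bound $\abs{\bd{Q}_{n}(t,\bd{q},\bd{p})-\bd{q}}\leq t\,\norm{\nabla E_{n}}_{L^{\infty}}$, forcing the slices of $\kappa_{n}(t)(\Omega)$ over a fixed $\bd{Q}$ to have bounded diameter --- the windowed Bloch isometry once more gives $I_{\psi}\lesssim\norm{\psi}_{L^{2}}^{2}$. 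Assembling the three estimates and tracking the constants produces $\norm{\mathcal{I}^{\veps}(u)}_{\mathscr{L}(L^{2})}\leq\norm{u}_{L^{\infty}}$.

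I expect the main difficulty to be twofold. First, the symbol's dependence on the physical variables $\bd{x},\bd{y}$, although bounded, does not factor across the $(\bd{x},\bd{y})$-integral, so the oscillatory cancellation must be carried entirely by the Gaussians and by the linear part of the phase $\Phi$; a crude estimate that discards this oscillation diverges, since $\Omega$ has infinite volume, which is exactly why the argument must be arranged so that $\norm{u}_{L^{\infty}}$ is extracted only at the level of the $(\bd{q},\bd{p})$-Cauchy--Schwarz while the two surviving quantities are genuine $L^{2}(\Omega)$ pairings covered by the reconstruction formula. Second, the Hamiltonian flow moves $\bd{p}$ out of the first Brillouin zone, so the change of variables in $I_{\psi}$ genuinely needs the gauge-periodicity of the Bloch data and the uniform multiplicity bound on $\kappa_{n}(t)(\Omega)$; establishing that bound, and tracking the normalization constants precisely enough to reach the sharp constant $1$ rather than merely some $C_{T}$ (which would anyway suffice for the proof of Theorem \ref{theoremA}), is the step that requires the most care.
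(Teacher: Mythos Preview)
The paper does not prove this lemma; it simply cites \cite{LuYa:11}*{Proposition 3.7} and omits the argument.  So there is no ``paper's own proof'' to compare against beyond that reference.

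Your outline is the right picture for symbols of the specific product form
\[
u(\bd{x},\bd{y},\bd{q},\bd{p})=a(\bd{q},\bd{p})\,u_{n}(\bd{P}_{n},\bd{x}/\veps)\,\wb{u}_{n}(\bd{p},\bd{y}/\veps),
\]
but it does not go through for a general $u$ as in the stated lemma.  The gap is in the passage from the pointwise bound $|u|\le\|u\|_{L^\infty}$ to the quantities $I_\varphi$ and $I_\psi$ you write down.  Once you take absolute values inside the $(\bd{x},\bd{y})$-integrals, the oscillatory factors disappear: $|G^{\veps}_{\bd{q},\bd{p}}(\bd{y})|=e^{-|\bd{y}-\bd{q}|^{2}/2\veps}$ is independent of $\bd{p}$, and no Bloch wave $u_{n}(\bd{p},\cdot/\veps)$ is present anywhere.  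Hence the $\varphi$-factor you are left with after Cauchy--Schwarz is
\[
\iint_{\Omega}\Bigl(\int_{\RR^{d}}|\varphi(\bd{y})|\,e^{-|\bd{y}-\bd{q}|^{2}/2\veps}\ud\bd{y}\Bigr)^{2}\ud\bd{q}\ud\bd{p},
\]
not $\iint_{\Omega}|\langle G^{\veps}_{\bd{q},\bd{p}}u_{n}(\bd{p},\cdot/\veps),\varphi\rangle|^{2}\ud\bd{q}\ud\bd{p}$.  The ``splitting each Gaussian weight'' remark does not repair this: there is simply no mechanism by which the Bloch factor, or any $\bd{p}$-oscillation, reappears after $|u|$ has been estimated.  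Without that oscillation the windowed Bloch isometry is unavailable, and the crude integral above only yields a factor $(2\pi)^{d}(2\pi\veps)^{d}\|\varphi\|_{2}^{2}$ from Young's inequality, which is off by $(2\pi/\veps)^{d/2}$ from what is needed.

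What your argument \emph{does} prove is the lemma for symbols that already carry the Bloch factors, so that the $\bd{x}$- and $\bd{y}$-integrals separate \emph{before} one takes absolute values and produce genuine windowed-Bloch coefficients.  Since the symbols $v_{n,2}^{\veps}$ in the proof of Theorem~\ref{theoremA} are built from $b_{n}^{\veps,1}\wb{u}_{n}(\bd{p},\bd{Y})$ and carry exactly this structure (possibly with $u_{n}$ replaced by finitely many of its $\bd{\xi}$-derivatives, which are equally bounded by Proposition~\ref{ubounds}), your argument suffices for the application, with a constant $C_{T}$ rather than $1$.  If you want the lemma in the generality stated, you need an argument that does not destroy the $\bd{p}$-oscillation prematurely---for instance, treat $u(\bd{x},\bd{y},\cdot,\cdot)$ as a multiplier between two FBI-type transforms and run a Schur/Cotlar--Stein estimate on the resulting kernel in phase space, as in \cite{LuYa:11}.
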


\begin{proof}
The proof of lemma \ref{FIObound} is essentially the same as Proposition 3.7 in ~\cite{LuYa:11} and thus is omitted here.
\end{proof}

We are now ready to prove Theorem~\ref{theoremA}.

\begin{proof}[Proof of Theorem~\ref{theoremA}]
Computing $\I\veps\dfrac{\partial}{\partial t}+\dfrac{1}{2}\veps^{2}\nabla^{2}-V(\bd{X})-U(\bd{x})$ applied to $\mathcal{I}^{\veps}\left(b_{n}^{\veps,1}(t,\dfrac{\bd{x}}{\veps},\bd{q},\bd{p})\wb{u}_{n}(\bd{p},\dfrac{\bd{y}}{\veps})\right)$, we obtain
\begin{equation}
\left(\I\veps\frac{\ud}{\ud t}+\dfrac{1}{2}\veps^{2}\nabla^{2}-V(\bd{X})-U(\bd{x})\right)\mathcal{I}^{\veps}\left(b_{n}^{\veps,1}\wb{u}_{n}(\bd{p},\bd{Y})\right)
=\mathcal{I}^{\veps}\left(\displaystyle\sum_{j=0}^{1}\veps^{j}v_{n,j}\right)+\veps^{2}\mathcal{I}^{\veps}\left(v_{n,2}^{\veps}\right).
\end{equation}
The expressions for $v_{n,0}$, $v_{n,1}$, and $v_{n,2}$ follows from  \eqref{eq:fgaexpansion} by expanding $b^{\veps}$ and the linearity of $L_{0}^{n}$, $L_{1}^{n}$, and $L_{2}^{n}$. By equations \eqref{eq:vanishingO(1)} and \eqref{eq:vanishingO(2)}, $v_{n,0}$ and $v_{n,1}$ vanish.
The remaining term
\begin{equation}\label{eq:remainding}
v_{n,2}^{\veps}=L_{2}^{n}(b_{n}^{\veps,1}\wb{u}_{n}(\bd{p},\bd{Y}))+R(\bd{x},\bd{q},\bd{p})b_{n}^{\veps,1}\wb{u}_{n}(\bd{p},\bd{Y}).
\end{equation}
satisfies $M_{k}[v_{n,2}^{\veps}]<\infty$ by Propositions \ref{Zinvbound}, \ref{amplitudebounds}, and \ref{perpbounds}. Finally, applying Lemma \ref{FIObound} and Lemma \ref{Remainderlemma} we obtain the inequality in Theorem~\ref{theoremA}.
\end{proof}

\bibliographystyle{amsxport}
\bibliography{fga}
\end{document}